\newtheorem{theorem}{Theorem}[section]
\newtheorem{corollary}{Corollary}[section]
\newtheorem{proposition}{Proposition}[section]
\newtheorem{lemma}{Lemma}[section]
\newtheorem{definition}{Definition}[section]
\newtheorem{remark}{Remark}[section]
\newenvironment{proof}{{\noindent\it Proof.}\quad}{\hfill $\square$\\}
\newcommand{\tabincell}[2]
\begin{document}

\title{Lasso trigonometric polynomial approximation for periodic function recovery in equidistant points}

\author{Congpei An\footnotemark[1]
       \quad \text{and}\quad Mou Cai\footnotemark[2]}

\renewcommand{\thefootnote}{\fnsymbol{footnote}}
\footnotetext[1]{School of Mathematics, Southwestern University of Finance and Economics, Chengdu, China (andbachcp@gmail.com,  ancp@swufe.edu.cn).}
\footnotetext[2]{Wuyang Fengshang, 218 Huilong East Road 2nd Road, Yongchuan District, Chongqing, China (caimou1230@163.com).}


\maketitle

\begin{abstract}
This paper introduces a fully discrete soft thresholding trigonometric polynomial approximation on $[-\pi,\pi],$  named Lasso trigonometric interpolation. This approximation is an $\ell_1$-regularized discrete least squares approximation under the same conditions of classical trigonometric interpolation on an equidistant grid. Lasso trigonometric interpolation is a sparse scheme which is efficient in dealing with noisy data. We theoretically analyze Lasso trigonometric interpolation quality for continuous periodic function. The $L_2$ error bound of Lasso trigonometric interpolation is less than that of classical trigonometric interpolation, which improved the robustness of trigonometric interpolation. The performance of Lasso trigonometric interpolation for several testing functions ($\sin$ wave, triangular wave, sawtooth wave, square wave), is illustrated with numerical examples, with or without the presence of data errors.
\end{abstract}

\textbf{Keywords: }{ trigonometric interpolation, periodic function, trapezoidal rule, Lasso, noise}

\textbf{AMS subject classifications.} {65D15, 65D05, 41A10,41A27,33C52}

\section{Introduction}\label{sec;introduction}
This paper is aimed at finding a trigonometric polynomial approximation to periodic continuous $f$ on $[-\pi,\pi],$    given noisy data values $f^\epsilon$ at points $x_j \in [-\pi,\pi],\;j=0,\ldots,N,$ using an $\ell_1$-regularized least squares strategy. In practice,  periodic functions are featured in many branches of science and engineering, for example, in digital synthesis, there are four periodic functions of widely using: sin wave,  triangular wave, sawtooth wave,  square wave \cite{Stilson1996AliasFreeDS}.  For their approximation, algebraic polynomials are not appropriate, since algebraic polynomials are not periodic. Naturally, we adopt trigonometric interpolations, which is one of the most special trigonometric polynomial as an approximation strategy. Trigonometric interpolation attract much interest and there is a lot of work can be found in the literature \cite{atkinson2007theoretical,2012Quadrature,Minimum2013,Numerical2008,kress1998numerical,Kress1993,powell1981approximation,trefethen2013approximation,ExpTrapezoidal,ExtPeriodic2015,zygmund2002trigonometric} and references therein. As interpolating nonzero $f\in \mathcal{C}_{2\pi}([-\pi,\pi])$ by $N+1$ equidistant points on $[-\pi,\pi],$ where $\mathcal{C}_{2\pi}([-\pi,\pi])$ is the space of $2\pi$-periodic continuous functions, this trigonometric interpolation of degree $\left[N/2 \right]$ is a numerical discretization of $L_2$ orthogonal projection, and it is highly related to some numerical methods in solving differential and integral equations \cite{pr?ssdorf1991numerical}.

For the case of an equidistant subdivision, we are interested in determining trigonometric interpolation polynomial by the view of least squares approximation problem rather than Lagrange representation \cite{kress1998numerical}. In Lemma  \ref{Lemma1}, we show that the trigonometric interpolation on an equidistant grid is the solution to a discrete least squares approximation problem with trapezoidal rule applied. With elements in equidistant points set and corresponding sampling values of the function deemed as input and output data,  respectively, studies on trigonometric interpolation on an equidistant grid assert that it is an effective approach to modeling mappings from input data to output data. In reality, one can not avoid noisy data in practical sampling. In this paper, we focus on how to recover periodic function with contaminated function values.  Lasso, the acronym for ``least absolute shrinkage and selection operator'', is a shrinkage and selection method for linear regression \cite{Tibshirani2011} which is blessed with the abilities of denoising and feature selection.   Using this technology, we propose $Lasso\;trigonometric\;interpolation\;(LTI),$ which is the analytic solution to an $\ell_1$-regularized least squares problem. In  recently, Lasso hyperinterpolation \cite{Anlasso2021} is discussed to recover continuous function with contaminated data. Both of them are making Lasso incorporating into handle noisy data.  The main difference is that the LTI is only used for dealing with periodic continuous function over the interval $[-\pi,\pi]$ under the condition of classical trigonometric interpolation, but Lasso hyperinterpolation is able to deal with more general continuous function over general regions under the same condition of hyperinterpolation \cite{SLOAN1995238}. As pointed by Hesse and Sloan \cite{2006Hyperinterpolation}, hyperinterpolation is distinct from interpolation, except the case of equal weight quadrature on circle \cite[Section 2]{2006Hyperinterpolation}.  LTI is based on trigonometric interpolation on an equidistant grid, which proceeds all trigonometric interpolation coefficients by a soft threshold operator. Compared with classical trigonometric interpolation on an equidistant grid, LTI is efficient and more stable in recovering noisy periodic function. Moreover, the LTI is more sparse than conventional trigonometric interpolation on an equidistant grid, which makes it  have  advantages in data storage.

We will study approximation properties of LTI and provide error analysis.   It is shown that in the absence of noise, the $L_2$ error of LTI for any nonzero periodic continuous function converges to a nonzero term which depends on the best approximation of this function, whereas such an error of trigonometric interpolation on an equidistant grid  converges to zero as $N\rightarrow \infty.$ However, in the presence of noise, LTI is able to reduce the newly introduced error term caused by noise via multiplying a factor less than one.

This paper is organized as follows. In Section 2, we give the preliminaries on trapezoidal rule and  trigonometric interpolation on an equidistant grid. In Section 3, we constructed the LTI with the help of trapezoidal rule and soft threshold operator.  In Section 4, we analyse the LTI in the view of sparsity. In Section 5, we study the approximation quality of the LTI in terms of the $L_2$ norm.  We give several numerical examples in Section 6 and conclude with some remarks in Section 7.

\section{Preliminaries}
In this section, we review some basic ideas of trigonometric interpolation on an equidistant grid. At first, we state the definition of the degree of trigonometric polynomials, which is different from algebraic polynomials.
\begin{definition}(Definition 8.20 in \cite{Kress1993})\label{def1}
For $d\in \mathbb{N}$ we denote by $\mathbb{T}_d$ the linear space of trigonometric polynomials
$$ q(x)=\sum_{k=0}^d u_k \cos k x+  \sum_{k=1}^d \nu_k \sin k x,$$
with real (or complex) coefficients $u_0,\ldots,u_d$ and $\nu_1,\ldots,\nu_d.$ A trigonometric polynomial $q \in \mathbb{T}_d$ is said to be of degree $d$ if $|u_d|+|\nu_d|>0.$
\end{definition}
It is well known that the cosine functions $c_k(x):=\cos kx, \; k=0,1,\ldots,d,$  and sine functions $s_k(x):=\sin kx, \; k=1,\ldots,d,$ are linearly independent in the continuous periodic function space $\mathcal{C}_{2\pi}([-\pi,\pi]).$ Therefore, the dimension of $\mathbb{T}_d$ defined in Definition \ref{def1} is $2d+1.$

The trapezoidal rule, which is known as the Newton--Cotes quadrature formula of order $1,$ occurs in almost all textbooks of numerical analysis as the most simple and basic quadrature rule  in numerical integration. Although it is inefficient compared with other high order Newton--Cotes quadrature formula and Gauss quadrature formula when integration is applied to a non periodic integrand \cite{kress1998numerical}, it is often exponentially accurate for periodic continuous function \cite{ExtPeriodic2015}. In this paper, trapezoidal rule will play an important role, since the construction of  trigonometric interpolation on an equidistant grid  could be realized by trapezoidal rule.

For any integer $N,$ let $g \in \mathcal{C}_{2\pi}([-\pi,\pi]),$  and $x_j=-\pi+2\pi j/N, \;j=0,1, \ldots, N-1$ be an equidistant subdivision with step $2\pi/N.$  The $N$-point trapezoidal rule takes the form
\begin{align}\label{trapezoidalrule}
\int_{-\pi}^\pi g(x) dx \approx \frac{2\pi}{N}\sum_{j=0}^{N-1}  g(x_j).
\end{align}
Since $g(x_0)=g(x_N)$ (periodicity), no special factors of 1/2 are needed at the endpoints. For concrete example of quadrature points of trapezoidal rule, see Figure 1.
\begin{figure}[htbp]
  \centering
  \includegraphics[width=\textwidth]{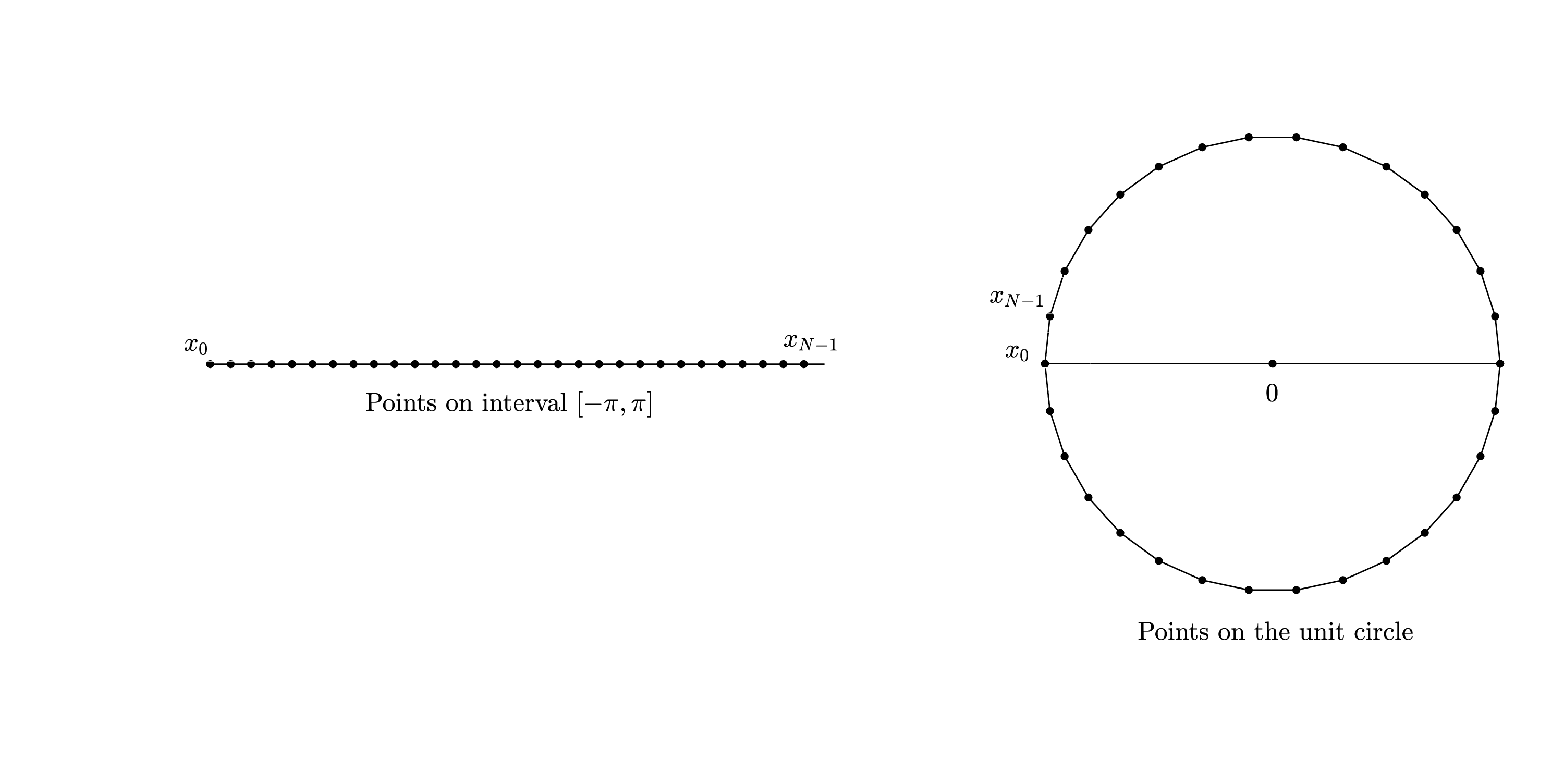}\\
\caption{Fix $N=30,$ the quadrature points of trapezoidal rule on the periodic interval $[-\pi,\pi],$ which can also be regarded as equal diversion points on the unit circle.   }
\end{figure}

\begin{lemma}(Corollary 3.3 in \cite{ExpTrapezoidal})
If $p$ is a trigonometric polynomial of degree $n,$ the $N$-point trapezoidal rule \eqref{trapezoidalrule} is exact for all $n<N,$ i.e.,
\begin{align}\label{exactness}
\int_{-\pi}^\pi p(x) dx =\frac{2\pi}{N} \sum_{j=0}^{N-1} p(x_j) \;\;\;\;\forall p \in \mathbb{T}_n.
\end{align}

\end{lemma}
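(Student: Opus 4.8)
The plan is to exploit linearity of both sides of \eqref{exactness} in $p$ and reduce the claim to a single spanning set of $\mathbb{T}_n$. Since $\mathbb{T}_n$ is spanned (over $\mathbb{C}$) by the exponentials $e_k(x):=e^{ikx}$, $k=-n,\ldots,n$ — equivalently by $1,\cos x,\ldots,\cos nx,\sin x,\ldots,\sin nx$ — and since the map $p\mapsto \int_{-\pi}^\pi p(x)\,dx - \frac{2\pi}{N}\sum_{j=0}^{N-1}p(x_j)$ is linear, it suffices to verify the identity for each $e_k$ with $|k|\le n$.

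For $k=0$ this is immediate, since both sides equal $2\pi$. For $1\le |k|\le n$ the left-hand side vanishes, so it remains to show the quadrature sum also vanishes. Substituting $x_j=-\pi+2\pi j/N$ gives $e_k(x_j)=e^{-i\pi k}\,\omega^j$ with $\omega:=e^{2\pi i k/N}$, hence
$$\sum_{j=0}^{N-1} e_k(x_j) = e^{-i\pi k}\sum_{j=0}^{N-1}\omega^{\,j} = e^{-i\pi k}\cdot\frac{\omega^N-1}{\omega-1} = e^{-i\pi k}\cdot\frac{e^{2\pi i k}-1}{\omega-1} = 0 .$$
Here the geometric-series evaluation is legitimate precisely because $\omega\neq 1$: indeed $\omega=1$ would force $N\mid k$, which is impossible when $0<|k|\le n<N$. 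Collecting the two cases and invoking linearity yields \eqref{exactness} for every $p\in\mathbb{T}_n$.

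The computation is short, so there is no serious obstacle; the one point worth stressing is where the hypothesis $n<N$ is actually used, namely exactly once, to guarantee $\omega\neq 1$, i.e.\ to rule out aliasing. Were $N\le n$, then for $k=N$ the summand $e^{ikx_j}$ would be identically $1$, the quadrature sum would equal $2\pi$ instead of $0$, and exactness would fail; thus the strict inequality cannot be relaxed. If one prefers to avoid complex exponentials, the same conclusion follows by applying the identities $\sum_{j=0}^{N-1}\cos kx_j=\sum_{j=0}^{N-1}\sin kx_j=0$ for $0<k<N$ directly to the real basis of $\mathbb{T}_n$, but the geometric-series route above is the cleanest.
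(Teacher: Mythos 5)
Your proof is correct. The paper itself gives no proof of this lemma (it is quoted as Corollary 3.3 of the cited trapezoidal-rule reference), but your argument --- reduce by linearity to the exponentials $e^{ikx}$ with $|k|\le n$, handle $k=0$ directly, and annihilate the quadrature sum for $k\neq 0$ via the geometric series, the hypothesis $n<N$ entering only to guarantee $e^{2\pi i k/N}\neq 1$ --- is the standard one and is essentially the same computation the paper carries out inside the proof of Proposition 2.1 (and implicitly again in Lemma 3.1). One cosmetic slip in your sharpness remark: for $k=N$ one has $e^{ikx_j}=e^{-i\pi N}=(-1)^N$ at every node, so the quadrature sum equals $2\pi(-1)^N$ rather than $2\pi$; the conclusion that exactness fails at $k=N$ is unaffected.
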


As a prelude to the introduction of trigonometric interpolation, it is convenient to introduce the $L_2$ orthogonal projection of periodic function, which is theoretically simpler but harder to  compute \cite{2006Hyperinterpolation}.
The best approximation to $f\in \mathcal{C}_{2\pi}[-\pi,\pi]$ in the sense of $L_2$ norm with respect to the space of trigonometric polynomials of degree at most $L$ is given by  \cite{kress1998numerical,ExtPeriodic2015,zygmund2002trigonometric}

 \begin{align}\label{trigprojection}
\mathcal{P}_L f(x):=\sum_{\ell=-L}^{L} c_\ell e^{i\ell x},
\end{align}
with Fourier coefficients
\begin{align}\label{Fouriercofficients2}
c_\ell=\frac{1}{2\pi} \int_{-\pi}^{\pi} f(x) e^{-i\ell x} dx.
\end{align}
The $L_2$ norm is defined by
 \begin{align*}
\Vert f\Vert_{L_2} :=\left<f(x), f(x) \right>_{L_2}^{\frac{1}{2}}=\left( \int_{-\pi}^{\pi}  |f(x)|^2 dx \right)^{\frac{1}{2}},
\end{align*}
where the $L_2$ inner product $\left<f(x),g(x) \right>_{L_2} :=\int_{-\pi}^{\pi} f(x) \overline{g(x)} dx.$ All coefficients in our discussions are in general complex, though in cases of certain symmetries they will be purely real or imaginary.  The trigonometric interpolation of $N+1$ equidistant points of interval is obtained by approximating the coefficients in \eqref{Fouriercofficients2} by  trapezoidal rule \cite{Henrici1979,ExtPeriodic2015}.
\begin{definition}\label{def:triginter }
Given $N+1$ equidistant points $x_0,\ldots,x_N \in [-\pi,\pi],$ and $N+1$ values $f(x_j) \in \mathbb{R}.$  The trigonometric interpolation polynomial on an equidistant grid is given by
\begin{numcases}{\mathcal{T}_n f(x):=}
\frac{1}{\sqrt{2\pi}}\sum_{\ell=-n}^{n} \tilde{c}_\ell e^{i\ell x}, &$N$\; is\;odd, \label{trigonometricinterpolation} \\
\frac{1}{\sqrt{2\pi}} \sum^{n}_{\ell=-n} {}^{'} \tilde{c}_\ell e^{i\ell x}, &$N$\; is\;even, \label{trigonometricinterpolation1}
\end{numcases}
where $n$ is the integer part of $N/2,$ i.e., $n=\left[ N/2\right],$ and it is assumed that $\tilde{c}_n=\tilde{c}_{-n}$ for $N$ is even; the prime `` $\prime $'' indicates that the terms corresponding to $\ell=\pm n$ are to be multiplied by $1/2;$   $\tilde{c}_\ell$ is a discrete version of the coefficients $c_\ell$ in \eqref{Fouriercofficients2} obtained by application of $N$-point trapezoidal rule \eqref{trapezoidalrule}:
\begin{align}\label{cofficientsTrig}
\tilde{c}_\ell=\frac{\sqrt{2\pi}}{N}\sum_{j=0}^{N-1} f(x_j) e^{-i\ell x_j},  \;\; \ell=-n,\ldots,n.
\end{align}
\end{definition}
The next result is concerned with trigonometric polynomial interpolation conditions. Note that the two endpoints of $[-\pi,\pi]$ is regarded as one interpolation point due to the periodicity.
\begin{proposition}\label{intercondition}
Let $\mathbb{T}_n$ be the linear space of trigonometric polynomials of degree at most $n.$ Then the trigonometric polynomial $\mathcal{T}_n f(x)\in \mathbb{T}_n$ defined by Definition 2.2 satisfies the following properties:\\

1. \;\;\;The dimension of trigonometric polynomial space $\mathbb{T}_n$ is equal to the number of interpolation points, i.e., ${\rm dim}\; \mathbb{T}_n=N.$\\

2. \; $\mathcal{T}_n f(x)$ through interpolation points, i.e., $\mathcal{T}_n f(x_j)=f(x_j), \quad \quad j=0,1,\ldots,N-1. $

\end{proposition}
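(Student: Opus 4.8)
The plan is to treat the two assertions separately. Part~1 is a bookkeeping step built on the fact recalled just above that $\dim\mathbb{T}_d=2d+1$. If $N$ is odd then $n=[N/2]=(N-1)/2$, so $\dim\mathbb{T}_n=2n+1=N$ and the first claim follows at once. If $N$ is even then $n=N/2$ and $\dim\mathbb{T}_n=2n+1=N+1$; here I would point out that in \eqref{trigonometricinterpolation1} the convention that the $\ell=\pm n$ terms carry a factor $1/2$ together with the imposed symmetry $\tilde c_n=\tilde c_{-n}$ makes the degree-$n$ part collapse to the single real mode $\tilde c_n\cos nx$, so the polynomials actually produced by $\mathcal{T}_n$ lie in the $N$-dimensional subspace $\operatorname{span}\{e^{i\ell x}:|\ell|<n\}\cup\{\cos nx\}$. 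Since $x_0=-\pi$ and $x_N=\pi$ coincide on the circle, there are exactly $N$ distinct interpolation nodes, and in both parities this matches the dimension of the space in which the interpolant is sought.

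Part~2 is the substantive claim; it reduces to the discrete orthogonality of the exponentials $e^{i\ell x_j}$ over the equidistant grid. I would substitute the formula \eqref{cofficientsTrig} for $\tilde c_\ell$ into \eqref{trigonometricinterpolation}--\eqref{trigonometricinterpolation1}, interchange the two finite sums (the $\sqrt{2\pi}$ factors cancel), and reach
\[
\mathcal{T}_n f(x_j)=\frac1N\sum_{k=0}^{N-1}f(x_k)\,D_n(x_j-x_k),
\]
where $D_n(\theta)=\sum_{\ell=-n}^{n}e^{i\ell\theta}$ for $N$ odd, and $D_n(\theta)=\tfrac12 e^{-in\theta}+\sum_{\ell=-n+1}^{n-1}e^{i\ell\theta}+\tfrac12 e^{in\theta}$ for $N$ even. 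Writing $x_j-x_k=2\pi(j-k)/N$ and $z=e^{2\pi i(j-k)/N}$, the kernel is a (possibly endpoint-weighted) geometric sum, and the crux is to prove $D_n(x_j-x_k)=N\delta_{jk}$ for $j,k\in\{0,\dots,N-1\}$. If $j=k$ then $z=1$ and every term equals $1$, giving $2n+1=N$ (odd case) and $\tfrac12+(2n-1)+\tfrac12=N$ (even case); if $j\neq k$ then $0<|j-k|<N$, so $z\neq1$ but $z^N=1$, and summing the geometric series (using $z^{\pm n}=(-1)^{j-k}$ to deal with the halved endpoints in the even case) gives $D_n(x_j-x_k)=0$. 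Substituting this back yields $\mathcal{T}_n f(x_j)=f(x_j)$ for $j=0,1,\dots,N-1$.

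The only real obstacle is computational: carrying out the even-$N$ geometric sum cleanly, where the prime convention and the constraint $\tilde c_n=\tilde c_{-n}$ interact, and keeping track that it is precisely the node range $0\le j,k\le N-1$ that forces the congruence $j\equiv k\pmod N$ to be equivalent to $j=k$. Once the orthogonality relation $D_n(x_j-x_k)=N\delta_{jk}$ is established, both the dimension count in Part~1 and the interpolation identity in Part~2 are immediate. I would also remark in passing that this relation is nothing but the exactness statement \eqref{exactness} of the preceding lemma applied to the trigonometric polynomial $x\mapsto e^{i\ell(x-x_k)}$ of degree $|\ell|\le n<N$, which keeps the argument consistent with the trapezoidal-rule framework set up in Section~2.
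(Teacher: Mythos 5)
Your argument is correct and is essentially the paper's own proof: substitute the discrete coefficients \eqref{cofficientsTrig} into \eqref{trigonometricinterpolation}--\eqref{trigonometricinterpolation1}, interchange the finite sums, and evaluate the resulting exponential kernel at $t=2\pi(j-k)/N$ by a geometric series, which vanishes for $j\neq k$ and equals $N$ for $j=k$. If anything, you are a bit more careful than the paper in the even-$N$ case (using $z^{\pm n}=(-1)^{j-k}$ for the halved endpoint terms, and noting that the constraint $\tilde c_{-n}=\tilde c_n$ collapses the degree-$n$ modes so the relevant space has dimension $N$), but the route is the same.
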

\begin{proof}
The dimension of $\mathbb{T}_n$ can be obtained directly by Definition 2.2. When $N$ is odd, it is clear that
\begin{align*}
\mathcal{T}_n f(x_j)&=\frac{1}{N} \sum_{\ell=-n}^{n} \left( \sum_{k=0}^{N-1} f(x_k) e^{-i\ell x_k}  \right) e^{i\ell x_j} \\
                    &= \frac{1}{N} \sum_{k=0}^{N-1} f (x_k) \left( \sum_{\ell=-n}^n e^{-i\ell x_k} e^{i\ell x_j} \right)= \frac{1}{N} \sum_{k=0}^{N-1} f (x_k) \left( \sum_{\ell=-n}^n e^{i\ell t_k} \right),
\end{align*}
where $t_k=x_j-x_k=\frac{2\pi(j-k)}{N}.$ In parallel, when $N$ is even, we have
\begin{align*}
\mathcal{T}_n f(x_j)&= \frac{1}{N} \sum_{k=0}^{N-1}  f (x_k) \left( \sum_{\ell=-n}^n{}^{'} e^{i\ell t_k} \right),
\end{align*}
for $e^{it_k} \neq 1$ we obtain
\begin{align*}
\sum_{\ell=-n}^n e^{i\ell t_k}=\sum_{\ell=-n}^n{}^{'} e^{i\ell t_k} =e^{-in t_k} \left( \frac{1-e^{it_k N}}{1-e^{it_k}} \right)=0,
\end{align*}
whereas for $e^{i t_k} =1$ each term in the sum is equal to one. Thus,
$$\mathcal{T}_n f(x_j)=  f (x_j).$$
\end{proof}
\begin{remark}
The assumption that $\tilde{c}_{-n}=\tilde{c}_{n}$ in \eqref{trigonometricinterpolation1} is crucial. When $N$ is even,  we have to determine $N+1$ coefficients of trigonometric interpolation, however, we have only $N$ interpolation conditions, which means that  $N$ linear equations have $N+1$ unknown variables. Thus, we need add another condition to reduce the number of variables to ensure the uniqueness of trigonometric interpolation.
\end{remark}

Functions that are even with respect to the center of the interval of interpolation are important in practice. In fact, with the help of symmetry, we have the following definition of trigonometric interpolation about even function.
\begin{definition}
Under conditions of Definition 2.2. The trigonometric interpolation polynomial for even function  on an equidistant grid is given by
\begin{align}\label{def:triginter1}
\mathcal{E}_n f(x):=\frac{1}{\sqrt{\pi}}\sum_{\ell=0}^{n} (\xi_\ell t_\ell) \cos\ell x,
\end{align}
where 
$$t_\ell:=\frac{\sqrt{2\pi}}{N}\sum_{j=0}^{N-1} f(x_j) \cos \ell x_j \;\;\; \text{and} \;\;\;
\xi_\ell:=\left\{
\begin{aligned}
 &1, \;\;\;\;\;\;\;\;\;\;\;\;\ell=0, \;N/2, \\\\
& \sqrt{2},  \;\; \;\;\;\;\;\;\;else.
\end{aligned}
\right.\;\;\;\;.
$$
\end{definition}

It is useful to note that the trigonometric interpolation on an equidistant grid is a projection, i.e.,
$$\mathcal{T}_n ( \mathcal{T}_n f)=\mathcal{T}_n f \in \mathbb{T}_n.$$

\section{Lasso trigonometric interpolation}
\subsection{Least squares approximation}

To introduce Lasso trigonometric interpolation, we show that trigonometric interpolation $\mathcal{T}_nf$ is a solution to a least squares approximation problem. Unless we have special illustration, we set all $n=\left[N/2\right]$ in this paper.  We draw out attention on the following discrete least squares approximation problem:
\begin{equation}\label{Trigapproximation1}
N\; {\rm is \; odd}: \quad\min_{p \in \mathbb{T}_n} \left\{ \frac{1}{2} \sum_{j=0}^{N-1}\frac{2\pi}{N} \left(p(x_j)-f(x_j)\right)^2  \right\}\;\; {\rm with} \;\; p(x)=\frac{1}{\sqrt{2\pi}}\sum_{\ell=-n}^{n} \alpha_{\ell}e^{i \ell x} \in \mathbb{T}_n,
\end{equation}
\begin{equation}\label{Trigapproximation2}
N\; {\rm is \; even}: \quad\min_{q \in \mathbb{T}_n} \left\{ \frac{1}{2} \sum_{j=0}^{N-1}\frac{2\pi}{N} \left(q(x_j)-f(x_j)\right)^2  \right\}\;\; {\rm with} \;\; q(x)=\frac{1}{\sqrt{2\pi}}\sum_{\ell=-n}^{n} {}^{'}\beta_{\ell}e^{i \ell x} \in \mathbb{T}_n,
\end{equation}
where $\beta_{-n}=\beta_n;$  $f$ is a given nonzero continuous $2\pi$-periodic function with values (possibly noisy)  taken at an equidistant points set $\mathcal{X}_{N}=\{x_0,x_1,\ldots,x_{N-1}\}$ on $[-\pi,\pi).$

When $N$ is odd, let $\mathbf{A} \in \mathbb{C}^{N\times N}$ be a matrix with elements $\left[ \mathbf{A} \right]_{j+1,\ell+n+1}=e^{i \ell x_j}/\sqrt{2\pi},\; j=0,1,\ldots,N-1$ and $\ell=-n,\ldots,n,$ and let $\mathbf{W} \in \mathbb{R}^{N\times N} $ be ${\rm diag} \left( \frac{2\pi}{N},\ldots,\frac{2\pi}{N}\right).$ The approximation problem \eqref{Trigapproximation1} can be transformed into the least squares approximation problem
\begin{align}\label{TrigapproxMatrix1}
\min_{\boldsymbol{\alpha} \in \mathbb{C}^N} \frac{1}{2} \Vert \mathbf{W}^{1/2}(\mathbf{A}\boldsymbol{\alpha}-\mathbf{f})\Vert_2^2,
\end{align}
where $\boldsymbol{\alpha}=[\alpha_{-n},\ldots,\alpha_n  ]^T \in \mathbb{C}^N $ is a collection of coefficients $\{\alpha_\ell \}_{\ell=-n}^n$ in constructing $p,$ and $\mathbf{f}=\left[f(x_0),\ldots,f(x_{N-1})  \right]^T \in \mathbb{R}^N$ is a vector of sampling values $\{ f(x_j) \}_{j=0}^{N-1}$ on $\mathcal{X}_N.$

When $N$ is even. Let $\mathbf{B} \in \mathbb{C}^{N\times N}$ be a matrix with elements

 \begin{align*}
 \left[ \mathbf{B} \right]_{j+1,\ell+n}=\frac{e^{i \ell x_j}}{\sqrt{2\pi}} \qquad {\rm and} \qquad \left[ \mathbf{B} \right]_{j+1,N}=\frac{\cos \frac{N}{2} x_j}{\sqrt{2\pi}},
 \end{align*}
 where $j=0,1,\ldots,N-1$ and $\ell=-n+1,\ldots,n-1.$ Then, the approximation problem \eqref{Trigapproximation2} reduces to the least squares approximation problem
\begin{align}\label{TrigapproxMatrix2}
\min_{\boldsymbol{\beta} \in \mathbb{C}^N} \frac{1}{2} \Vert \mathbf{W}^{1/2}(\mathbf{B}\boldsymbol{\beta}-\mathbf{f})\Vert_2^2,
\end{align}
where $\boldsymbol{\beta}=[\beta_{-n},\ldots,\beta_{n-1} ]^T \in \mathbb{C}^N $ is a collection of coefficients $\{\beta_\ell \}_{\ell=-n}^{n-1}$ in constructing $q.$
\begin{lemma}\label{Lemma1}
Given nonzero $f\in \mathcal{C}_{2\pi}([-\pi,\pi])$ and $N$ equidistant points on $[-\pi,\pi),$ and let $\mathcal{T}_n f \in \mathbb{T}_n$ be defined by Definition 2.2.  Then \eqref{trigonometricinterpolation} is the unique solution to  \eqref{TrigapproxMatrix1}, and \eqref{trigonometricinterpolation1} is the unique solution to  \eqref{TrigapproxMatrix2}.
\end{lemma}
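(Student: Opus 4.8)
The plan is to recognize the weighted least squares problems \eqref{TrigapproxMatrix1} and \eqref{TrigapproxMatrix2} as ordinary (unweighted) least squares problems whose coefficient matrices are square and invertible, so that the minimum is attained uniquely when the residual is exactly zero; then to check that the trigonometric interpolant from Definition 2.3 is precisely that zero-residual solution. First I would treat the odd case. Since $\mathbf{W}$ is a positive multiple of the identity, $\|\mathbf{W}^{1/2}(\mathbf{A}\boldsymbol{\alpha}-\mathbf{f})\|_2^2=\frac{2\pi}{N}\|\mathbf{A}\boldsymbol{\alpha}-\mathbf{f}\|_2^2$, so \eqref{TrigapproxMatrix1} is equivalent to minimizing $\|\mathbf{A}\boldsymbol{\alpha}-\mathbf{f}\|_2^2$ over $\boldsymbol{\alpha}\in\mathbb{C}^N$. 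The matrix $\mathbf{A}$ has entries $e^{i\ell x_j}/\sqrt{2\pi}$; up to the scalar $1/\sqrt{2\pi}$ this is a (conjugate) Vandermonde / DFT-type matrix on the $N$-th roots of unity, and I would argue it is nonsingular — either by invoking the discrete orthogonality relations $\frac{1}{N}\sum_{j=0}^{N-1} e^{i(\ell-m)x_j}=\delta_{\ell m}$ used already in the proof of Proposition \ref{intercondition} (which show $\mathbf{A}^{*}\mathbf{A}=\frac{N}{2\pi}\mathbf{I}$, hence $\mathbf{A}$ has full rank), or by the standard Vandermonde determinant argument since the nodes $e^{ix_j}$ are distinct. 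Consequently $\mathbf{A}\boldsymbol{\alpha}=\mathbf{f}$ has a unique solution $\boldsymbol{\alpha}^{\star}$, this $\boldsymbol{\alpha}^{\star}$ makes the objective zero, and since the objective is nonnegative it is the unique minimizer.

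It then remains to identify $\boldsymbol{\alpha}^{\star}$ with the coefficient vector $(\tilde c_{-n},\dots,\tilde c_n)$ of \eqref{trigonometricinterpolation}. By Proposition \ref{intercondition}, the trigonometric interpolant $\mathcal{T}_n f$ defined through the coefficients $\tilde c_\ell$ of \eqref{cofficientsTrig} satisfies $\mathcal{T}_n f(x_j)=f(x_j)$ for $j=0,\dots,N-1$, which in matrix form reads exactly $\mathbf{A}\,[\tilde c_{-n},\dots,\tilde c_n]^{T}=\mathbf{f}$. By uniqueness of the solution of this square linear system, $\boldsymbol{\alpha}^{\star}=[\tilde c_{-n},\dots,\tilde c_n]^{T}$, so \eqref{trigonometricinterpolation} is the unique solution of \eqref{TrigapproxMatrix1}.

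For $N$ even I would proceed analogously, the only subtlety being the reduction of the $N+1$ nominal coefficients $\{\beta_\ell\}_{\ell=-n}^{n}$ to the $N$ genuine unknowns via the constraint $\beta_{-n}=\beta_n$: combining the two end terms $\beta_{-n}e^{-inx}+\beta_{n}e^{inx}$ into $2\beta_n\cos nx$ (consistent with the prime-notation convention in \eqref{trigonometricinterpolation1}, where $\ell=\pm n$ terms carry the factor $1/2$) gives exactly the matrix $\mathbf{B}$ with its modified last column $\cos\frac{N}{2}x_j/\sqrt{2\pi}$. I would then show $\mathbf{B}$ is nonsingular — again either by a direct orthogonality computation of $\mathbf{B}^{*}\mathbf{B}$ using $\sum_j e^{i\ell x_j}$-type sums and the identity $\cos\frac{N}{2}x_j=(-1)^j$, or by noting that the column space of $\mathbf{B}$ is the evaluation-at-nodes image of $\mathbb{T}_n$ restricted to the symmetric subspace, which by Proposition \ref{intercondition}(2) contains $\mathbf{f}$ and has dimension $N$. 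As before, the unique zero-residual solution is the coefficient vector of \eqref{trigonometricinterpolation1}, because Proposition \ref{intercondition} guarantees the interpolation conditions $\mathcal{T}_n f(x_j)=f(x_j)$ hold for this polynomial, i.e. $\mathbf{B}\boldsymbol{\beta}^{\star}=\mathbf{f}$.

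The only real work is the invertibility of $\mathbf{A}$ and $\mathbf{B}$; everything else is bookkeeping. For $\mathbf{A}$ this is immediate from the discrete orthogonality relations already exploited in the proof of Proposition \ref{intercondition}. The main obstacle is the even case: one must be careful that the substitution $\beta_{-n}=\beta_n$ is faithfully encoded by the replacement of two columns $e^{\pm inx_j}/\sqrt{2\pi}$ by the single column $\cos\frac{N}{2}x_j/\sqrt{2\pi}$, and that the resulting square matrix $\mathbf{B}$ is still nonsingular — which I would verify by computing $\mathbf{B}^{*}\mathbf{B}$ explicitly (it is diagonal, with the last diagonal entry coming from $\sum_{j=0}^{N-1}\cos^2\frac{N}{2}x_j = N$ since $\cos\frac{N}{2}x_j=(-1)^j$), thereby confirming full rank and hence uniqueness.
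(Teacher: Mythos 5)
Your argument is correct, but it follows a genuinely different route from the paper's. You exploit that $\mathbf{A}$ (resp.\ $\mathbf{B}$) is square and nonsingular, so the weighted objective attains its minimum value zero precisely at the unique solution of $\mathbf{A}\boldsymbol{\alpha}=\mathbf{f}$, and you then identify that zero-residual solution with the coefficient vector of $\mathcal{T}_n f$ by invoking the interpolation conditions of Proposition \ref{intercondition}. The paper does not use Proposition \ref{intercondition} at this point at all: it writes the first-order (normal) equations $\mathbf{A^*WA}\boldsymbol{\alpha}=\mathbf{A^*Wf}$ and $\mathbf{B^*WB}\boldsymbol{\beta}=\mathbf{B^*Wf}$, shows $\mathbf{A^*WA}=\mathbf{B^*WB}=\mathbf{I}$ by transferring the continuous orthogonality of the exponentials through the exactness \eqref{exactness} of the trapezoidal rule, and then observes that the entries of $\mathbf{A^*Wf}$ and $\mathbf{B^*Wf}$ coincide term by term with the trapezoidal-rule coefficients $\tilde c_\ell$ of \eqref{cofficientsTrig}, so the minimizer is $\mathcal{T}_n f$ by construction rather than by the interpolation property. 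The underlying computation is shared: your $\mathbf{A^*A}=\frac{N}{2\pi}\mathbf{I}$ is the paper's $\mathbf{A^*WA}=\mathbf{I}$ up to the constant weight, and your check $\sum_{j=0}^{N-1}\cos^2\frac{N}{2}x_j=N$ is the paper's $[\mathbf{B^*WB}]_{N-1,N-1}=1$. What the two routes buy is different: yours is more elementary (nonnegativity plus invertibility makes uniqueness immediate) but leans on Proposition \ref{intercondition} and requires the careful reparametrization $\beta_{-n}e^{-inx}+\beta_ne^{inx}=2\beta_n\cos nx$ in the even case, which you rightly flag as the only delicate bookkeeping; the paper's normal-equation formulation yields the explicit identity $\boldsymbol{\alpha}=\mathbf{A^*Wf}$, which is exactly the form reused afterwards in the proofs of Theorem \ref{theorem:lasso} and Theorem \ref{thm:sparsity}, so the extra structure is not wasted.
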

\begin{proof}
 Taking the first derivative of the objective function in problem \eqref{TrigapproxMatrix1} and \eqref{TrigapproxMatrix2} with respect to $\boldsymbol{\alpha}, \boldsymbol{\beta},$  respectively, leads to the first order condition
\begin{align}\label{firstoder}
\mathbf{A^*WA} \boldsymbol{\alpha}-\mathbf{A^*Wf}=\mathbf{0} \qquad {\rm and} \qquad \bf{B^*WB} \boldsymbol{\beta}-\bf{B^*Wf}=\bf{0},
\end{align}
where $*$ denotes conjugate transpose.  Note that the assumption $f\neq 0$ implies $\Vert \bf{A^*Wf}\Vert_\infty$ and $ \Vert \bf{B^*Wf}\Vert_\infty$ are nonzero vectors. On the one hand, with the help of exactness \eqref{exactness}, both of the matrix $\bf{A^*WA}$ and $\bf{B^*WB}$ are identity matrix, as all entries of them satisfy
$$
[\mathbf{A^*WA}]_{\ell+n+1, \ell'+n+1}=[\mathbf{B^*WB}]_{\ell+n+1, \ell'+n+1}=\frac{1}{N} \sum_{j=0}^{N-1} e^{i\ell x_j} e^{-i\ell' x_j}=\frac{1}{2\pi}\int_{-\pi}^{\pi} e^{i\ell x} e^{-i\ell' x} dx=\delta_{\ell \ell'},
$$
where $\delta_{\ell\ell'}$ are the Kronecker delta,  $\ell,\ell'=-n,\ldots, n$  and when $N$ is even, $\ell,\; \ell'$ can't equal to $n$ at the same time. The third equality holds from $e^{i\ell x} e^{-i \ell' x} \in \mathbb{T}_{N-1} \subseteq \mathbb{T}_{N}. $ In particular,
$$ \left[\mathbf{B^*WB} \right]_{N,N}=\frac{2\pi}{N} \sum_{j=0}^{N-1}  \left( \frac{\cos \frac{N}{2}x_j}{\sqrt{2\pi}}\right)^2=1.$$
On the other hand, the components of $\bf{A^*Wf}, \;\bf{B^*Wf}$  can be  expressed by
\begin{align*}
&\left[ \bf{A^*Wf}\right]_{\ell+n+1}=\frac{\sqrt{2\pi}}{N} \sum_{j=0}^{N-1}e^{-i\ell x_j} f(x_j), \; \ell=-n,\ldots,n,\\
&\left[ \bf{B^*Wf}\right]_{\ell'+n}=\frac{\sqrt{2\pi}}{N} \sum_{j=0}^{N-1}e^{-i\ell' x_j} f(x_j), \; \ell'=-n+1,\ldots,n-1,
\end{align*}
and
\begin{align*}
\left[ \bf{B^*Wf}\right]_{N}=\frac{\sqrt{2\pi}}{N} \sum_{j=0}^{N-1}\cos (\frac{N}{2} x_j) f(x_j).
\end{align*}
Hence, the trigonometric polynomials constructed with coefficients $\boldsymbol{\alpha}=\bf{A^*Wf}$ or $\boldsymbol{\beta}=\bf{B^*Wf}$ are indeed $\mathcal{T}_n f. $
\end{proof}

Note that the assumption $\beta_{-n}=\beta_{n}$ in \eqref{Trigapproximation2}  ensures that the matrix in the equivalent form of \eqref{Trigapproximation2} is nonsingular, and when $f(x)$ is even function, the assumption will be replaced by $\beta_{-\ell}=\beta_{\ell}, \;\ell=-n,\ldots,n.$
\begin{lemma}\label{Lemma2}
Given even $f\in \mathcal{C}_{2\pi}([-\pi,\pi])$ with respect to the center of interval $[-\pi,\pi].$  The coefficients of trigonometric interpolation $\xi_0 t_0,\ldots,\xi_n t_n$ in  \eqref{def:triginter1} is the unique solution to the following least squares problem
\begin{align}\label{TrigapproxMatrix3}
\min_{\boldsymbol{\eta} \in \mathbb{C}^{n+1}} \frac{1}{2} \Vert \mathbf{W}^{1/2}(\mathbf{C}\boldsymbol{\eta}-\mathbf{f})\Vert_2^2,
\end{align}
where $\left[ \mathbf{C}\right]_{j+1,\ell+1}=\frac{\tau_\ell}{\sqrt{\pi}} \cos \ell x_j, \;\ell=0, \ldots,n;$ $j=0, \ldots,N-1;\; \tau_\ell $ is defined by
$$
\tau_\ell:=\left\{
\begin{aligned}
 &\frac{1}{\sqrt{2}}, \;\;\;\;\;\ell=0,\; N/2, \\\\
& 1,  \;\; \;\;\;\;\;\;\;else,
\end{aligned}
\right.
$$
 and $\mathbf{W},\;\mathbf{f}$ are the same definition in \eqref{TrigapproxMatrix1}.
\end{lemma}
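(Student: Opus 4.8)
The plan is to mimic the proof of Lemma~\ref{Lemma1}. Since the objective in \eqref{TrigapproxMatrix3} is a convex quadratic and $\mathbf{C}$, $\mathbf{W}$, $\mathbf{f}$ are all real (so that $\mathbf{C}^{*}=\mathbf{C}^{T}$), its minimizers are exactly the solutions of the normal equations
\[
  \mathbf{C}^{T}\mathbf{W}\mathbf{C}\,\boldsymbol{\eta}=\mathbf{C}^{T}\mathbf{W}\mathbf{f}.
\]
I would then establish two facts: (i) $\mathbf{C}^{T}\mathbf{W}\mathbf{C}=\mathbf{I}_{n+1}$, so that the objective is strictly convex and the normal equations have the \emph{unique} solution $\boldsymbol{\eta}=\mathbf{C}^{T}\mathbf{W}\mathbf{f}$; and (ii) $\mathbf{C}^{T}\mathbf{W}\mathbf{f}=[\xi_0\tilde{c}_0,\dots,\xi_n\tilde{c}_n]^{T}$, which is precisely the coefficient vector appearing in \eqref{def:triginter1}. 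Combining (i) and (ii) gives the assertion; expanding $\mathbf{C}\boldsymbol{\eta}$ in the cosine basis then recovers $\mathcal{E}_n f$.

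For (i), the $(\ell,\ell')$ entry of $\mathbf{C}^{T}\mathbf{W}\mathbf{C}$ is $\frac{2\pi}{N}\sum_{j=0}^{N-1}\frac{\tau_\ell\tau_{\ell'}}{\pi}\cos\ell x_j\cos\ell' x_j$. Using $\cos\ell x\cos\ell' x=\tfrac12\bigl(\cos(\ell+\ell')x+\cos(\ell-\ell')x\bigr)$ together with the elementary identity that $\sum_{j=0}^{N-1}\cos m x_j$ equals $N(-1)^m$ when $N\mid m$ and vanishes otherwise (it is the real part of a geometric sum; for $0<m<N$ it is also the exactness \eqref{exactness} applied to $\cos mx\in\mathbb{T}_{N-1}$), every off-diagonal entry drops out, because for $\ell\neq\ell'$ both $\ell+\ell'$ and $|\ell-\ell'|$ lie strictly between $0$ and $N$. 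On the diagonal, for $0<\ell<n$ one obtains $\tau_\ell^{2}=1$, while for $\ell=0$ and, when $N$ is even, for $\ell=n$, one has $2\ell\equiv0\pmod N$, so $\cos^2\ell x_j\equiv1$ and the entry equals $2\tau_\ell^{2}=1$ thanks to the half-weight $\tau_0=\tau_n=1/\sqrt2$. This yields $\mathbf{C}^{T}\mathbf{W}\mathbf{C}=\mathbf{I}_{n+1}$, the exact analogue of the computation $[\mathbf{B}^{*}\mathbf{W}\mathbf{B}]_{N-1,N-1}=1$ in the proof of Lemma~\ref{Lemma1}.

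For (ii), the $\ell$-th component of $\mathbf{C}^{T}\mathbf{W}\mathbf{f}$ is $\frac{2\sqrt{\pi}\,\tau_\ell}{N}\sum_{j=0}^{N-1}f(x_j)\cos\ell x_j$. Here the evenness of $f$ about the centre of $[-\pi,\pi]$ enters: since $x_{N-j}=-x_j$ and $f(-x_j)=f(x_j)$, pairing $j$ with $N-j$ gives $\sum_{j}f(x_j)\sin\ell x_j=0$, hence $\sum_{j}f(x_j)\cos\ell x_j=\sum_{j}f(x_j)e^{-i\ell x_j}=\tfrac{N}{\sqrt{2\pi}}\,\tilde{c}_\ell$ by \eqref{cofficientsTrig} (in particular each $\tilde{c}_\ell$ is real). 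Substituting, the $\ell$-th component equals $\tfrac{2\sqrt{\pi}}{\sqrt{2\pi}}\tau_\ell\tilde{c}_\ell=\sqrt2\,\tau_\ell\tilde{c}_\ell$, and a direct check of the definitions of $\tau_\ell$ and $\xi_\ell$ shows $\sqrt2\,\tau_\ell=\xi_\ell$ for every $\ell\in\{0,\dots,n\}$. Together with (i) this shows that $\boldsymbol{\eta}=[\xi_0\tilde{c}_0,\dots,\xi_n\tilde{c}_n]^{T}$ is the unique minimizer of \eqref{TrigapproxMatrix3}.

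The main obstacle is the bookkeeping in step~(i): one must cleanly separate the ``diagonal'' contribution $\cos(\ell-\ell')x\mapsto\cos0=1$ from the $\cos(\ell+\ell')x$ term, and then handle the two exceptional indices $\ell\in\{0,n\}$ (the second only when $N$ is even), where $\cos2\ell x_j$ collapses to $1$ instead of summing to zero. This is exactly the phenomenon that forces the weights $\tau_\ell=1/\sqrt2$, mirroring the prime half-weighting of the endpoint modes in the even-$N$ case; getting these boundary normalizations right is what makes $\mathbf{C}^{T}\mathbf{W}\mathbf{C}$ equal to the identity rather than a diagonal matrix carrying $1/2$'s in its corner positions.
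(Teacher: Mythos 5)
Your proposal is correct and follows essentially the same route as the paper's proof: write down the normal equations, show $\mathbf{C}^{*}\mathbf{W}\mathbf{C}=\mathbf{I}_{n+1}$ (with the special treatment of the indices $\ell=0$ and, for $N$ even, $\ell=n$ that the weights $\tau_\ell=1/\sqrt{2}$ are designed to fix), and identify $\mathbf{C}^{*}\mathbf{W}\mathbf{f}$ with the coefficient vector $[\xi_0\tilde{c}_0,\dots,\xi_n\tilde{c}_n]^{T}$. You merely fill in details the paper leaves implicit, namely the product-to-sum/geometric-sum verification of orthogonality in place of a bare appeal to the exactness \eqref{exactness}, and the use of evenness of $f$ to equate $\sum_j f(x_j)\cos\ell x_j$ with $\tfrac{N}{\sqrt{2\pi}}\tilde{c}_\ell$.
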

\begin{proof}
Considering the first order condition of \eqref{TrigapproxMatrix3} with respect to $\boldsymbol{\eta},$ we have
\begin{align*}
\left [ \mathbf{C^* W C }\right]_{\ell+1,\ell'+1}=\frac{2\tau_\ell^2}{N} \sum_{j=0}^{N-1} \cos \ell x_j \cos \ell' x_j,
\end{align*}
where $\ell,\ell'=0,\ldots,n.$  For $N$ is even, since $ \cos \frac{N}{2} x_j =(-1)^j,$ then $ \left [ \mathbf{C^* W C }\right]_{n+1,n+1}=1.$  With the help of exactness \eqref{exactness}, we obtain
\begin{align*}
\left [ \mathbf{C^* W C }\right]_{\ell+1,\ell'+1}= \delta_{\ell \ell'}.
\end{align*}
 For the components of $\mathbf{C^* W f},$ we have
$$\left[\mathbf{C^* W f}\right]_{\ell+1}= \frac{2\tau_\ell \sqrt{\pi}}{N} \sum_{j=0}^{N-1}\cos (\ell x_j) f(x_j),\;\; \ell=0,\ldots,n.$$
Hence, the optimal solution to the least squares problem \eqref{TrigapproxMatrix3} can be expressed by the coefficients of trigonometric interpolation defined by \eqref{def:triginter1}.
\end{proof}
\begin{remark}

We point out similarity between trigonometric interpolation on an equidistant grid and hyperinterpolation on a circle \cite{SLOAN1995238}.  The hyperinterpolation of degree $n$ is a numerical discretization of the $L_2$ orthogonal projection constructed by a quadrature rule that exactly integrates all trigonometric polynomials of degree at most $n$\;(algebraic polynomials is at most $2n$), and thus the hyperinterpolation with the trapezoidal rule \eqref{trapezoidalrule} on a circle is just the same as trigonometric interpolation on an equidistant grid. This explains why hyperinterpolation does not arise in the classical literature. However, hyperinterpolation is distinct from trigonometric interpolation for the more general region \cite{2006Hyperinterpolation,SLOAN1995238}. The difference between interpolation and hyperinterpolation is, that in the latter case the number of quadrature points exceeds the degree of freedom.
\end{remark}

\subsection{Lasso regularization}
Now we start to involve Lasso into  $\mathcal{T}_n f.$  From the original idea of Lasso \cite{Lasso1996}, it restricts the sum of absolute values of coefficients to be bounded by some positive number, say, $\eta.$ Then, the trigonometric polynomial which is incorporating Lasso into $\mathcal{T}_n f$ can be achieved via solving the constrained least squares problem
\begin{align}\label{lassoleast}
\min_{p\in \mathbb{T}_n} \left\{ \frac{1}{2} \sum_{j=0}^{N-1} \frac{2\pi}{N} \left( p(x_j)-f(x_j)\right)^2 \right\} \;\;{\rm subject\;to}\; \sum_{\ell=-n}^n |\alpha_\ell| \leq \eta.
\end{align}
It is natural to transform \eqref{lassoleast} into the following $\ell_1$-regularized least squares problem
\begin{align}\label{Lassoproblem}
\min_{p\in \mathbb{T}_n}  \left\{ \frac{1}{2} \sum_{j=0}^{N-1} \frac{2\pi}{N} \left( p(x_j)-f(x_j)\right)^2+\lambda\sum_{\ell=-n}^n |\gamma_\ell| \right\}\quad{\rm with}\quad p=\frac{1}{\sqrt{2\pi}}\sum_{\ell=-n}^n \gamma_\ell e^{i\ell x} \in \mathbb{T}_n,
\end{align}
where $\gamma_{-n}=\gamma_{n}$ for $N$ is even; $\mathcal{X}_{N}=\{x_0,x_1,\ldots,x_{N-1}\}$ is an equidistant points set on $[-\pi,\pi);\;\lambda>0$ is the regularization parameter. And we adopt new notation $\gamma_\ell$ instead of using $\alpha_\ell$ in order to distinguish the coefficients of trigonometric interpolation from those of $\mathcal{T}_n f.$ We also set $\gamma_\ell \in \mathbb{R}$ such that we obtain the first order condition of optimization problem \eqref{Lassoproblem} more easily.
The solution to problem \eqref{Lassoproblem} is our Lasso trigonometric interpolation. Firstly, we give the definition of Lasso trigonometric interpolation. Then we show that the Lasso trigonometric interpolation is indeed the explicit solution to \eqref{Lassoproblem}. Now, we introduce the soft threshold operator \cite{Threshold2011}.

\begin{definition}\label{softthreshold}
The soft threshold operator, denoted by $\mathcal{S}_k(a),$ is
$$\mathcal{S}_k(a):=\max(0,a-k)+\min(0,a+k).$$
\end{definition}
 \noindent We add $\lambda$ as a superscript into $\mathcal{T}_n f,$ denoting that this is a regularized version (Lasso regularized) of $\mathcal{T}_n f$ with regularization parameter $\lambda>0.$
\begin{definition}
Given nonzero $f \in \mathcal{C}_{2\pi}([-\pi,\pi]),$ and $N$ equidistant points on $[-\pi,\pi).$  A Lasso trigonometric interpolation of $f$ is defined as

\begin{numcases}{\mathcal{T}^{\lambda}_n f(x):=}
\frac{1}{\sqrt{2\pi}}\sum_{\ell=-n}^{n} \mathcal{S}_\lambda(\tilde{c}_\ell) e^{i\ell x}, &$N$\; is\;odd, \label{Lassotriginterpolation} \\
\frac{1}{\sqrt{2\pi}} \sum^{n}_{\ell=-n} {}^{'} \mathcal{S}_\lambda(\tilde{c}_\ell) e^{i\ell x}, &$N$\; is\;even, \label{Lassotriginterpolation1}
\end{numcases}
and it is assumed that $\mathcal{S}_\lambda(\tilde{c}_{-n})=\mathcal{S}_\lambda(\tilde{c}_{n})$ for $N$ is even. In particular, when $f$ is an even function with respect to the center of \;$[-\pi,\pi].$  The Lasso trigonometric interpolation of $f$ is defined as
\begin{align}\label{Lassotriginterpolation2}
\mathcal{E}^{\lambda}_n f(x):=\frac{1}{\sqrt{\pi}}\sum_{\ell=0}^{n} \mathcal{S}_\lambda(\xi_\ell t_\ell) \cos \ell x,
\end{align}
where $t_\ell,\;\xi_\ell$ are the same definition in \eqref{def:triginter1},\;$\lambda>0.$
\end{definition}
  The logic of Lasso trigonometric interpolation is to process each coefficient $\tilde{c}_\ell$ or $\xi_\ell t_\ell$ of trigonometric interpolation on an equidistant gird by a soft threshold operator $\mathcal{S}_\lambda(\cdot).$ Similar to the process from \eqref{Trigapproximation1} to \eqref{TrigapproxMatrix1}, problem \eqref{Lassoproblem} can also be transformed into
\begin{align}\label{LassotrigMatrix}
 \min_{\boldsymbol{\gamma} \in \mathbb{R}^{N}} \frac{1}{2} \Vert \mathbf{W}^{1/2}(\mathbf{A}\boldsymbol{\gamma}-\mathbf{f})\Vert_2^2+\lambda\Vert \boldsymbol{\gamma}\Vert_1,\;\; \lambda>0, \;\; N\; {\rm is\;odd}
 \end{align}
 and
 \begin{align}\label{LassotrigMatrix1}
\min_{\boldsymbol{\gamma} \in \mathbb{R}^{N}} \frac{1}{2} \Vert \mathbf{W}^{1/2}(\mathbf{B}\boldsymbol{\gamma}-\mathbf{f})\Vert_2^2+\lambda\Vert \boldsymbol{\gamma}\Vert_1,\;\; \lambda>0, \;\; N\; {\rm is\;even},
\end{align}
where $\boldsymbol{\gamma}=[\gamma_{-n},\ldots,\gamma_{n}   ]^T \in \mathbb{R}^{N},$  and $\boldsymbol{\gamma}=[\gamma_{-n},\ldots,\gamma_{n-1}   ]^T \in \mathbb{R}^{N}$ for $N$ is even. As the convex term $\Vert \cdot\Vert_1$ is nonsmooth, some subdifferential calculus of convex functions \cite{atkinson2007theoretical} is needed. Then we have the following result.
\begin{theorem}\label{theorem:lasso}
Let $\mathcal{T}^\lambda _n f\in \mathbb{T}_n$ be defined by Definition 3.2, and adopt conditions of Lemma 3.1. Then \eqref{Lassotriginterpolation} is the solution to \eqref{LassotrigMatrix}, and \eqref{Lassotriginterpolation1} is the solution to \eqref{LassotrigMatrix1}.
\end{theorem}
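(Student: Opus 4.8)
The plan is to mimic the proof of Lemma \ref{Lemma1}, but now the objective function is the sum of a smooth quadratic term and the nonsmooth $\ell_1$ term $\lambda\|\boldsymbol{\gamma}\|_1$. Since each problem \eqref{LassotrigMatrix} and \eqref{LassotrigMatrix1} is convex, it suffices to verify the first-order optimality condition: $\mathbf{0}$ lies in the subdifferential of the objective at the candidate point $\boldsymbol{\gamma}^\star$ whose $\ell$-th entry is $\mathcal{S}_\lambda(\tilde c_\ell)$. Concretely, for the odd case I would write the optimality condition as
\begin{align*}
\mathbf{0}\in \mathbf{A}^*\mathbf{W}(\mathbf{A}\boldsymbol{\gamma}^\star-\mathbf{f})+\lambda\,\partial\|\boldsymbol{\gamma}^\star\|_1,
\end{align*}
and then exploit the two facts already established in the proof of Lemma \ref{Lemma1}: $\mathbf{A}^*\mathbf{W}\mathbf{A}=\mathbf{I}$ (from the exactness \eqref{exactness} of the trapezoidal rule on $\mathbb{T}_{N-1}$) and $[\mathbf{A}^*\mathbf{W}\mathbf{f}]_{\ell}=\tilde c_\ell$. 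Hence the condition decouples coordinatewise into $0\in \gamma^\star_\ell-\tilde c_\ell+\lambda\,\partial|\gamma^\star_\ell|$, i.e. $\tilde c_\ell\in \gamma^\star_\ell+\lambda\,\partial|\gamma^\star_\ell|$.

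The second step is the scalar lemma: for fixed $a\in\mathbb{C}$ (or $\mathbb{R}$) and $k>0$, the unique minimizer of $t\mapsto \tfrac12|t-a|^2+k|t|$ is $t=\mathcal{S}_k(a)$, equivalently $a\in t+k\,\partial|t|$ has unique solution $\mathcal{S}_k(a)$. This is the standard soft-thresholding identity; I would verify it by a short case analysis on the sign of $a$ relative to $\pm k$ (using $\partial|t|=\{\mathrm{sgn}\,t\}$ for $t\neq 0$ and $\partial|0|=[-1,1]$), matching exactly Definition \ref{softthreshold}. Applying this with $a=\tilde c_\ell$, $k=\lambda$ gives $\gamma^\star_\ell=\mathcal{S}_\lambda(\tilde c_\ell)$, which is precisely the coefficient appearing in \eqref{Lassotriginterpolation}, so $\mathcal{T}^\lambda_n f$ solves \eqref{LassotrigMatrix}.

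For the even case the argument is essentially identical with $\mathbf{A}$ replaced by $\mathbf{B}$: the proof of Lemma \ref{Lemma1} already records $\mathbf{B}^*\mathbf{W}\mathbf{B}=\mathbf{I}$ (including the entry $[\mathbf{B}^*\mathbf{W}\mathbf{B}]_{N-1,N-1}=1$) and gives the components of $\mathbf{B}^*\mathbf{W}\mathbf{f}$ in terms of $\tilde c_\ell$ and $\tilde c_{N/2}$. The one subtlety is the constraint $\gamma_{-n}=\gamma_n$: since the columns of $\mathbf{B}$ corresponding to $\ell=\pm n$ have been merged into the single column $\cos(\tfrac N2 x_j)/\sqrt{2\pi}$, the reduced problem is genuinely over $N$ free variables, so the coordinatewise decoupling still applies and the $\pm n$ term produces $\mathcal{S}_\lambda(\tilde c_n)$, accounting for the prime notation (factor $1/2$) in \eqref{Lassotriginterpolation1}. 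I expect the main obstacle to be purely expository rather than mathematical: being careful that the subdifferential is taken over the correct (reduced, $N$-dimensional) coefficient space in the even case, and stating the complex-valued soft-thresholding / subdifferential facts cleanly; if one prefers, one can split real and imaginary parts and note that the real and imaginary coefficients minimize separate decoupled scalar problems, reducing everything to the real scalar soft-thresholding identity. Uniqueness in \eqref{LassotrigMatrix} and \eqref{LassotrigMatrix1} follows from strict convexity of the objective, since $\mathbf{A}^*\mathbf{W}\mathbf{A}=\mathbf{I}\succ 0$ (resp. $\mathbf{B}^*\mathbf{W}\mathbf{B}=\mathbf{I}$) makes the quadratic part strictly convex, so the stated solution is in fact the unique one.
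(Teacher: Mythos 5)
Your proposal follows essentially the same route as the paper's proof: it invokes the first-order (subdifferential) optimality condition for the convex objective, uses the facts $\mathbf{A^*WA}=\mathbf{I}$ (resp.\ $\mathbf{B^*WB}=\mathbf{I}$) and $\boldsymbol{\alpha}=\mathbf{A^*Wf}$ established in Lemma \ref{Lemma1} to decouple the condition coordinatewise, and then resolves each scalar inclusion by the standard case analysis that yields the soft threshold operator $\mathcal{S}_\lambda(\tilde c_\ell)$, exactly as in the paper (which likewise treats the even-$N$ case as ``the same way''). Your added remarks on the merged $\pm n$ column, on splitting real and imaginary parts, and on uniqueness via strict convexity are harmless refinements of the same argument rather than a different approach.
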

\noindent The method of the following proof is based on Theorem 3.4 in \cite{Anlasso2021}.\\
\begin{proof}
 For the even length case, i.e., $N$ is odd, in Lemma 3.1, we have proved that $\bf{A^*WA}$ is an identity matrix, and $\boldsymbol{\alpha}=\bf{A^*Wf}$ is the coefficients of trigonometric interpolation $\mathcal{T}_n f(x)$ defined by \eqref{trigonometricinterpolation}. Then $\boldsymbol{\gamma}=[\gamma_{-n},\ldots,\gamma_{n}       ]^T$ is a solution to \eqref{LassotrigMatrix} if and only if

\begin{align}\label{firstorder1}
\mathbf{0}\in  \mathbf {A^*WA\boldsymbol{\gamma}}-\mathbf{A^*W f}+\lambda \partial(\Vert \boldsymbol{\gamma}\Vert_1)=\boldsymbol{\gamma}-\boldsymbol{\alpha}+\lambda\partial(\Vert \boldsymbol{\gamma}\Vert_1),
\end{align}
where $\partial(\cdot)$ denotes the subdifferential. The first-order condition \eqref{firstorder1} is equivalent to
\begin{align*}
0\in \gamma_\ell-\alpha_\ell+\lambda \partial(|\gamma_\ell|),\;\; \ell=-n,\ldots,n,
\end{align*}
where
\begin{align*}
\partial(|\gamma_\ell|)=\left\{
\begin{aligned}
&1    \qquad\qquad\quad\;\;{\rm if}\; \gamma_\ell>0,    \\
&-1   \qquad\qquad\; {\rm if}\; \gamma_\ell<0,   \\
& \in[-1,1]  \qquad{\rm if}\; \gamma_\ell=0.
\end{aligned}
\right.
\end{align*}
If we denote by $\boldsymbol{\gamma^*}=[\gamma_{-n}^*,\ldots,\gamma_{n}^*]^T $ the optimal solution to \eqref{LassotrigMatrix}, then
\begin{align*}
\gamma_\ell^*=\alpha_\ell-\lambda\partial(|\gamma_\ell^*|), \;\;\;\ell=-n,\ldots,n.
\end{align*}
Thus, there are three cases should be considered:\\

(1) If  $\alpha_\ell>\lambda,$ then $\alpha_\ell-\lambda\partial(|\gamma^*_\ell|)>0;$ thus $\gamma^*_\ell>0,$ yielding $\partial(|\gamma^*_\ell|)=1,$ then
$\gamma^*_\ell=(\alpha_\ell-\lambda)>0.$\\

(2) If $\alpha_\ell<-\lambda,$ then $\alpha_\ell+\lambda\partial(|\gamma^*_\ell|)<0;$ thus $\gamma_\ell^*<0,$ giving $\partial(|\gamma_\ell^*|)=-1,$ then
$\gamma_\ell^*=(\alpha_\ell+\lambda)<0.$\\

(3) If $-\lambda\leq \alpha_\ell\leq \lambda,$ then on the one hand, $\gamma_\ell^*>0$ leads to $\partial(|\gamma_\ell^*|)=1,$ and then $\gamma_\ell^*\leq 0,$  on the other hand,
$\gamma_\ell^*<0$ produces $\partial(|\gamma_\ell^*|)=-1,$ and hence $\gamma_\ell^*\geq 0.$ Two contradictions enforce $\gamma_\ell^* $ to be $0.$
As we hoped, with the aid of soft threshold operator, we obtain
\begin{align*}
\gamma_\ell^*&= \left(\max(0,\alpha_\ell-\lambda)+\min(0,\alpha_\ell+\lambda) \right) \\
&=\mathcal{S}_\lambda( \alpha_\ell),
\end{align*}
which corresponding the coefficients of $\mathcal{T}^\lambda _n f$ defined by \eqref{Lassotriginterpolation}.  For the odd length ($N$ is even), we can get the solution to \eqref{LassotrigMatrix1} is the coefficients of $\mathcal{T}^\lambda _n f$ defined by \eqref{Lassotriginterpolation1} by the same way. In summary, $\mathcal{T}^\lambda _n f$ is indeed the solution to the regularized least squares approximation problem \eqref{LassotrigMatrix} or \eqref{LassotrigMatrix1}.
\end{proof}

\noindent In the case of even function, we have the following result.
\begin{corollary}
When $f$ is an even function with respect to the center of the interval $[-\pi,\pi].$  The coefficients of LTI \eqref{Lassotriginterpolation2} is the analytic solution to the following $\ell_1$-regularized least squares problem
\begin{align}\label{LassotrigMatrix:x}
 \min_{\boldsymbol{\gamma} \in \mathbb{R}^{n+1}} \frac{1}{2} \Vert \mathbf{W}^{1/2}(\mathbf{C}\boldsymbol{\gamma}-\mathbf{f})\Vert_2^2+\lambda\Vert \boldsymbol{\gamma}\Vert_1,\;\; \lambda>0,
 \end{align}
where $\mathbf{W}, \;\mathbf{C},\; \mathbf{f}$ are the same definition in Lemma 3.2.
\end{corollary}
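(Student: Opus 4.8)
The plan is to follow exactly the route of the proof of Theorem \ref{theorem:lasso}, now with the matrix $\mathbf{C}$ and the quadrature weights $\mathbf{W}$ of Lemma \ref{Lemma2} in place of $\mathbf{A}$, $\mathbf{B}$. First I would recall from Lemma \ref{Lemma2} the two facts that drive the argument: (i) $\mathbf{C}^*\mathbf{W}\mathbf{C}=\mathbf{I}_{n+1}$, which follows from the exactness \eqref{exactness} of the trapezoidal rule on $\mathbb{T}_{N-1}$ together with the special value $[\mathbf{C}^*\mathbf{W}\mathbf{C}]_{n,n}=1$ that must be checked separately when $N$ is even (because there $\cos\frac{N}{2}x_j=(-1)^j$); and (ii) the vector $\boldsymbol{\eta}:=\mathbf{C}^*\mathbf{W}\mathbf{f}$ has components $[\mathbf{C}^*\mathbf{W}\mathbf{f}]_\ell=\frac{2\pi}{N}\sum_{j=0}^{N-1}\frac{\tau_\ell}{\sqrt{\pi}}\cos(\ell x_j)f(x_j)$, which Lemma \ref{Lemma2} identifies with the coefficient vector $[\xi_0\tilde c_0,\dots,\xi_n\tilde c_n]^T$ of the unregularized interpolant $\mathcal{E}_n f$ in \eqref{def:triginter1}; here one uses that $f$ being even annihilates the sine contributions, so every $\tilde c_\ell$ is real and the ensuing scalar problems are genuine one-dimensional real problems.

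Next, since the objective in \eqref{LassotrigMatrix:x} is convex with a smooth quadratic part and the nonsmooth term $\lambda\|\cdot\|_1$, a vector $\boldsymbol{\gamma}\in\mathbb{C}^{n+1}$ is a minimizer if and only if
\begin{align*}
\mathbf{0}\in \mathbf{C}^*\mathbf{W}\mathbf{C}\boldsymbol{\gamma}-\mathbf{C}^*\mathbf{W}\mathbf{f}+\lambda\,\partial\!\left(\|\boldsymbol{\gamma}\|_1\right)=\boldsymbol{\gamma}-\boldsymbol{\eta}+\lambda\,\partial\!\left(\|\boldsymbol{\gamma}\|_1\right),
\end{align*}
where the second equality invokes (i). Because $\mathbf{C}^*\mathbf{W}\mathbf{C}$ is the identity, this inclusion decouples coordinatewise into $0\in\gamma_\ell-\eta_\ell+\lambda\,\partial(|\gamma_\ell|)$ for $\ell=0,\dots,n$, with the subdifferential of $|\cdot|$ exactly as recorded in the proof of Theorem \ref{theorem:lasso}. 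I would then repeat verbatim the three-case analysis there (distinguishing $\eta_\ell>\lambda$, $\eta_\ell<-\lambda$, and $|\eta_\ell|\le\lambda$), concluding $\gamma_\ell^*=\mathcal{S}_\lambda(\eta_\ell)$, i.e. $\boldsymbol{\gamma}^*=[\mathcal{S}_\lambda(\xi_0\tilde c_0),\dots,\mathcal{S}_\lambda(\xi_n\tilde c_n)]^T$. Uniqueness of the minimizer is immediate because the quadratic part is strictly convex ($\mathbf{C}^*\mathbf{W}\mathbf{C}=\mathbf{I}_{n+1}$ is positive definite), which is what justifies calling this the analytic solution.

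The last step is to match $\boldsymbol{\gamma}^*$ against the coefficient sequence appearing in \eqref{Lassotriginterpolation2}: substituting $\boldsymbol{\gamma}^*$ back through $\mathbf{C}$ should reproduce $\sum_{\ell=0}^n \frac{\xi_\ell}{\sqrt{\pi}}\mathcal{S}_\lambda(\tilde c_\ell)\cos\ell x$ after using the positive homogeneity of the soft-threshold operator, $\mathcal{S}_{c k}(c a)=c\,\mathcal{S}_k(a)$ for $c>0$, to move the normalizing constants $\xi_\ell$, $\tau_\ell$ in and out of $\mathcal{S}_\lambda(\cdot)$. This bookkeeping is the step I expect to be the real obstacle: one must keep straight the interplay between the two normalizations $\xi_\ell$ (used in \eqref{def:triginter1} and \eqref{Lassotriginterpolation2}) and $\tau_\ell$ (used inside $\mathbf{C}$), and in particular treat the endpoint indices $\ell=0$ and, for even $N$, $\ell=n$ separately, precisely as in Lemma \ref{Lemma2}. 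Everything else is a direct transcription of the arguments already established for Lemma \ref{Lemma2} and Theorem \ref{theorem:lasso}.
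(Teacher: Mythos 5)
Your proposal is correct and follows essentially the same route as the paper: the paper's own proof consists of exactly the two observations you make, namely that $\mathbf{C}^*\mathbf{W}\mathbf{C}$ is the identity (from Lemma 3.2) so that the first-order subdifferential condition decouples coordinatewise, and then the three-case soft-thresholding argument of Theorem 3.1 applied to $\boldsymbol{\eta}=\mathbf{C}^*\mathbf{W}\mathbf{f}$. The $\xi_\ell$/$\tau_\ell$ bookkeeping you flag in the last step is a genuine wrinkle, but the paper's proof passes over it silently as well, so it does not distinguish your argument from theirs.
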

\begin{proof}
From the proof of Lemma \ref{Lemma2}, we know $\mathbf{C^*WC}$ is an identity matrix. Thus, from the proof of Theorem \ref{theorem:lasso}, we have the analytic solution to \eqref{LassotrigMatrix:x} which corresponding to the coefficients of LTI defined by \eqref{Lassotriginterpolation2}.
\end{proof}

\noindent There are three important facts of LTI which distinguishes it from trigonometric interpolation on an equidistant grid.
\begin{remark}
When $\lambda=0,$ coefficients reduce to
\begin{align*}
\mathcal{S}_\lambda (\tilde{c}_\ell)=\tilde{c}_\ell,\;\; \ell=-n,\ldots,n,
\end{align*}
which are coefficients of trigonometric interpolation on an equidistant grid. Thus \eqref{Lassotriginterpolation} and \eqref{Lassotriginterpolation1} could be regarded as a generalization of trigonometric interpolation on an equidistant grid of interval $[-\pi,\pi].$
\end{remark}
\begin{remark}
In practice, the sample data can not be free from noise contaminated. However, it is obviously that the trigonometric interpolation satisfies
$$\mathcal{T}_n f^\epsilon (x_j) = f^\epsilon (x_j),\; 0\leq j\leq N,$$
  where $x_0,x_1,\ldots,x_N$ are equidistant points on $[-\pi,\pi].$ Note that $\mathcal{S}_\lambda(\tilde{c}_\ell) \neq \tilde{c}_\ell$ for $\lambda>0,$  thus the LTI does not satisfy the classical interpolation property, i.e., $\mathcal{T}^\lambda_n f^\epsilon(x_j) \neq f^\epsilon (x_j).$
\end{remark}
\begin{remark}
The fact $\mathcal{S}_\lambda(\tilde{c}_\ell) \neq \tilde{c}_\ell$ for $\lambda>0$ also implies that $\mathcal{T}^\lambda_n p\neq p$ for all $p\in \mathbb{T}_n.$ Hence, $\mathcal{T}^\lambda_n $ is not a projection operator, as $\mathcal{T}^\lambda_n(\mathcal{T}^\lambda_nf) \neq \mathcal{T}^\lambda_nf$ for any nonzero $ f\in \mathcal{C}_{2\pi} ([-\pi,\pi]).$
\end{remark}

\section{Sparsity of the Lasso trigonometric interpolation}
When we approximation continuous periodic problems by the classical trigonometric interpolation on an equidistant grid, we can store it by their coefficients with the form of vector. In real-word problems, these vectors would rarely contain many strict zeros. However, the LTI is relatively sparse than classical trigonometric interpolation on an equidistant grid, since Lasso could produce sparse solutions  by minimizing $\ell_1$ norm. And this fact means the less storage space of the LTI. For topics on sparsity, we refer to \cite{Bruckstein2009,clarke1983optimization}. Now, we are considering the sparsity of LTI of even length \eqref{Lassotriginterpolation}.

 The sparsity of solution $\boldsymbol{\gamma}$ to the $\ell_1$-regularized approximation problem \eqref{LassotrigMatrix} is measured by the number of nonzero elements of $\boldsymbol{\gamma},$ denoted as $\Vert \boldsymbol{\gamma}\Vert_0,$ also known as the zero ``norm'' (it is not a norm actually) of $\boldsymbol{\gamma}.$ If $\lambda>0,$ then $\Vert \mathbf{A^* Wf}\Vert_0$ becomes an upper bound of the number of nonzero elements of $\boldsymbol{\gamma}.$  Moreover, we obtain the exact number of nonzero elements of $\boldsymbol{\gamma}$ with the help of the information of $\boldsymbol{\gamma}.$

\begin{theorem}\label{thm:sparsity}
 Under conditions of Theorem \ref{Lemma1}, let $\boldsymbol{\gamma}=[\gamma_{-n},\ldots,\gamma_n]^T$ be the coefficients of LTI \eqref{Lassotriginterpolation} and $\boldsymbol{\alpha}=\mathbf{A^*Wf}.$  Then the number of nonzero elements of $\boldsymbol{\gamma}$ satisfies

(1) If $\lambda=0,$ then $ \Vert \boldsymbol{\gamma}\Vert_0$ satisfies  $\Vert  \boldsymbol{\gamma}\Vert_0=\Vert \mathbf{A^* Wf}\Vert_0.$ \\

(2) If $\lambda>0,$ then $ \Vert \boldsymbol{\gamma}\Vert_0$ satisfies  $\Vert  \boldsymbol{\gamma}\Vert_0\leq \Vert \mathbf{A^* Wf}\Vert_0,$ more precisely,
\begin{align*}
\Vert \boldsymbol{\gamma}\Vert_0=\Vert \mathbf{A^*Wf}\Vert_0-\#\{\ell:|\alpha_\ell|\leq \lambda \;and \;|\alpha_\ell| \neq 0 \},
\end{align*}
where $ \#\{ \cdot  \}$ denotes the cardinal number of the corresponding set.
\end{theorem}
\begin{proof}
When $\lambda=0,$  the LTI reduces to trigonometric interpolation on the same interpolation points, which suggests their coefficients are totally equal, i.e., $\Vert \boldsymbol{\gamma}\Vert_0=\Vert \mathbf{A^* Wf}\Vert_0.$ When $\lambda>0,$  as the discussion in Theorem \ref{theorem:lasso}, given nonzero entries of $\mathbf{A^*Wf},$ LTI enforces those $\alpha_\ell$ satisfying $|\alpha_\ell|$ to be zero. Hence, assertion (2) holds naturally.
\end{proof}

\noindent Analogously, for the sparsity of LTI of odd length \eqref{Lassotriginterpolation1},  we can obtain the following theorem.
\begin{theorem}
Under conditions of Theorem \ref{Lemma1}, let $\boldsymbol{\gamma}=[\gamma_{-n},\ldots,\gamma_{n-1}]^T$ be the coefficients of LTI \eqref{Lassotriginterpolation1} and $\boldsymbol{\beta}=\mathbf{B^*Wf}.$ Then, the number of nonzero elements of $\boldsymbol{\gamma}$ satisfies

(1) If $\lambda=0,$ then $ \Vert \boldsymbol{\gamma}\Vert_0$ satisfies  $\Vert\boldsymbol{\gamma}\Vert_0=\Vert \mathbf{B^* Wf}\Vert_0.$ \\

(2) If $\lambda>0,$ then $ \Vert \boldsymbol{\gamma}\Vert_0$ satisfies  $\Vert \boldsymbol{\gamma}\Vert_0\leq \Vert \mathbf{B^* Wf}\Vert_0,$ more precisely,
\begin{align*}
\Vert \boldsymbol{\gamma}\Vert_0=\Vert \mathbf{B^*Wf}\Vert_0-\#\{\ell:|\beta_\ell|\leq \lambda \;and \;|\beta_\ell| \neq 0 \},
\end{align*}
where $ \#\{ \cdot  \}$ denotes the cardinal number of the corresponding set.
\end{theorem}

In an extreme case, we could even set large enough $\lambda$ so that all coefficients of LTI \eqref{Lassotriginterpolation} and \eqref{Lassotriginterpolation1} are enforced to be 0. However, as we are constructing a trigonometric polynomial to approximate some $f \in \mathcal{C}_{2\pi}([-\pi,\pi] ),$ this is definitely not the case we desire. Then we have the rather simple but important result, a parameter choice rule for $\lambda$ such that the coefficient is not zero.

\begin{theorem}\label{nonzeroAssumption}
Adopt conditions of Theorem 3.1. If $\lambda< \Vert \bf{ A^*Wf}\Vert_\infty $ (even length)  or $\lambda< \Vert \bf{B^*Wf}\Vert_\infty$ (odd length), then the LTI \eqref{Lassotriginterpolation} or \eqref{Lassotriginterpolation1} is nonzero polynomial.

\end{theorem}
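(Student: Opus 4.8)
The plan is to reduce this statement directly to the definition of the soft threshold operator and the explicit formula for the coefficients of the LTI established in Theorem~\ref{theorem:lasso}. Recall that $\mathcal{S}_\lambda(a) = \max(0, a-\lambda) + \min(0, a+\lambda)$, so $\mathcal{S}_\lambda(a) \neq 0$ precisely when $|a| > \lambda$. Hence the LTI polynomial $\mathcal{T}^\lambda_n f$ is the zero polynomial if and only if $\mathcal{S}_\lambda(\tilde{c}_\ell) = 0$ for every $\ell = -n, \ldots, n$, which in turn holds if and only if $|\tilde{c}_\ell| \leq \lambda$ for all $\ell$, i.e. $\max_{-n \le \ell \le n} |\tilde{c}_\ell| \leq \lambda$.

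Next I would connect the quantities $\tilde{c}_\ell$ to the vectors $\mathbf{A^*Wf}$ and $\mathbf{B^*Wf}$. From the proof of Lemma~\ref{Lemma1} we have the explicit componentwise identities $[\mathbf{A^*Wf}]_\ell = \frac{\sqrt{2\pi}}{N}\sum_{j=0}^{N-1} e^{-i\ell x_j} f(x_j) = \tilde{c}_\ell$ (even length case), and similarly $[\mathbf{B^*Wf}]_\ell = \tilde{c}_\ell$ for $\ell = -n+1, \ldots, n-1$ together with the modified top component involving $\cos(\tfrac{N}{2}x_j)$ (odd length case). Therefore $\|\mathbf{A^*Wf}\|_\infty = \max_\ell |\tilde{c}_\ell|$ and likewise for the $\mathbf{B}$ case, so the hypothesis $\lambda < \|\mathbf{A^*Wf}\|_\infty$ (resp. $\lambda < \|\mathbf{B^*Wf}\|_\infty$) says exactly that there exists at least one index $\ell^\star$ with $|\tilde{c}_{\ell^\star}| > \lambda$.

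To finish, I would argue that for this index $\mathcal{S}_\lambda(\tilde{c}_{\ell^\star}) \neq 0$, so the corresponding Fourier mode $\mathcal{S}_\lambda(\tilde{c}_{\ell^\star}) e^{i\ell^\star x}/\sqrt{2\pi}$ appears with nonzero coefficient in \eqref{Lassotriginterpolation} or \eqref{Lassotriginterpolation1}; since the exponentials $\{e^{i\ell x}\}_{\ell=-n}^n$ are linearly independent in $\mathcal{C}_{2\pi}([-\pi,\pi])$, a trigonometric polynomial with a nonzero coefficient cannot be identically zero, hence $\mathcal{T}^\lambda_n f$ is a nonzero polynomial. One small point of care for the odd-length case: the prime on the sum in \eqref{trigonometricinterpolation1} and the constraint $\tilde c_{-n}=\tilde c_n$ mean that if the maximizing index is $\pm n$ the coefficient is halved, but halving a nonzero number is still nonzero, so the conclusion is unaffected; and if the top component of $\mathbf{B^*Wf}$ (the $\cos\frac{N}{2}x_j$ one) is the one exceeding $\lambda$, it likewise contributes a nonzero $\cos nx$ term.

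I do not anticipate a genuine obstacle here — the result is essentially a restatement of the support condition for the soft threshold operator. The only thing requiring mild attention is bookkeeping: making sure the identification $\|\mathbf{A^*Wf}\|_\infty = \max_\ell|\tilde c_\ell|$ is spelled out cleanly, and handling the even-function variant \eqref{Lassotriginterpolation2} analogously if one wishes to include it, where $\|\mathbf{C^*Wf}\|_\infty$ plays the same role via Lemma~\ref{Lemma2}. The proof should be only a few lines.
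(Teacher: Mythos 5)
Your proof is correct, but it takes a slightly different route than the paper. The paper argues by contraposition at the level of the optimality condition: it supposes the LTI is the zero polynomial, plugs $\boldsymbol{\gamma}=\boldsymbol{0}$ into the first-order condition \eqref{firstorder1}, obtains $\mathbf{A^*Wf}\in\lambda\,\partial(\Vert\boldsymbol{0}\Vert_1)$ componentwise, hence $\Vert \mathbf{A^*Wf}\Vert_\infty\leq\lambda$, and then takes the contrapositive (similarly for $\mathbf{B}$). You instead invoke the closed-form solution already established in Theorem~\ref{theorem:lasso}, namely $\gamma_\ell=\mathcal{S}_\lambda(\tilde c_\ell)$, combine it with the support property $\mathcal{S}_\lambda(a)\neq 0$ exactly when $|a|>\lambda$ and the identification $[\mathbf{A^*Wf}]_\ell=\tilde c_\ell$ (so $\Vert\mathbf{A^*Wf}\Vert_\infty=\max_\ell|\tilde c_\ell|$), and finish with linear independence of the exponentials; your bookkeeping for the halved $\pm n$ terms and the $\cos\frac{N}{2}x$ component in the even-$N$ case is sound. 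Both arguments rest on the same first-order condition, but yours is the more explicit one and, as a small bonus, delivers the exact characterization (the LTI is the zero polynomial if and only if $\lambda\geq\Vert\mathbf{A^*Wf}\Vert_\infty$), i.e.\ the threshold in the theorem is sharp; the paper's version is marginally more robust in that it never needs the closed form and would survive in settings (e.g.\ a non-orthonormal design matrix) where no explicit soft-threshold formula is available. One shared caveat, inherited from the paper rather than introduced by you: the coefficients $\tilde c_\ell$ are in general complex, while $\mathcal{S}_\lambda$ as defined via $\max/\min$ is a real-variable operator; under the paper's implicit real treatment (or the standard complex soft threshold, for which $\mathcal{S}_\lambda(a)\neq 0$ iff $|a|>\lambda$ still holds) your argument goes through unchanged.
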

\begin{proof}
When $N$ is odd, suppose to the contrary that the LTI is zero polynomial. Then the first-order condition \eqref{firstorder1} with $\boldsymbol{0}$ gives $\mathbf{A^*Wf} \in \lambda \partial (\| \boldsymbol{0}\|_1 ) =\lambda [ - 1, 1],$ leading to $\| \bf{A^*Wf}\|_ \infty \leq \lambda .$ Hence, its contrapositive also holds: If $\lambda < \| \bf{A^*Wf}\|_ \infty$ , then $\boldsymbol{\gamma}$ could not be $\boldsymbol{0}.$ The case of even $N$ is similar.
\end{proof}

\begin{remark}
For the special case that $f(x)$ is an even function with respect to the center of the interval $[-\pi,\pi],$ obviously, it has totally same consequences as Theorem 4.1 and Theorem 4.2. However, the coefficients of LTI defined by \eqref{Lassotriginterpolation2} are half of the LTI defined by \eqref{Lassotriginterpolation} or \eqref{Lassotriginterpolation1}, which means that the storage space reduce by half. Hence, for even function, we give priority to the LTI defined by \eqref{Lassotriginterpolation2}.
\end{remark}

 Theorem 4.1 and Theorem 4.2 state that the minimization with Lasso is better than minimization without regularization in terms of sparsity, which suggests that the LTI has an advantage over classical trigonometric interpolation on an equidistant grid in data storage.

\section{Error analysis}
We then study the approximation quality of LTI \eqref{Lassotriginterpolation} and  \eqref{Lassotriginterpolation1}  in terms of $L_2$ norm and in the presence of noise. We denote by $ f^\epsilon $ a noisy $f,$ and regard both $f$ and $f^\epsilon$ as continuous for the following analysis. Regarding the noisy version $f^\epsilon$ as continuous is convenient for theoretical analysis, and is always adopted by other scholars in the field of approximation, see, for example, \cite{Parameterchoice2015}. We adopt this trick, and investigate the approximation properties in the sense of $L_2$ error.
The error of best approximation of $f\in\mathcal{C}_{2\pi}([-\pi,\pi])$  by  an element $p$ of $\mathbb{T}_n$ is also involved, which is defined by
\begin{align}
E_n(f):=\inf_{p\in \mathbb{T}_n} \Vert f-p\Vert_\infty.
\end{align}
By the corollary of Fej${\rm{\acute{e}}}$r's theorem (see Corollary 5.4 in \cite{stein2003fourier}), we have the best approximation trigonometric polynomial of degree $n$ to $f,$ i.e., $E_n(f)=\Vert f-p^*\Vert_\infty,$  where $p^*$ denotes this best approximation trigonometric polynomial.

The Lasso trigonometric interpolation $\mathcal{T}^\lambda_n f$ defined by Definition 3.2 and trigonometric interpolation  on an equidistant grid $\mathcal{T}_n f (x)$ defined by Definition 2.2 can be rewritten by following forms:
\begin{align}\label{operator}
\mathcal{T}^\lambda_n f (x):= \frac{1}{\sqrt{2\pi}}\sum_{\ell=-n}^{n+\theta} s_\ell  e^{i\ell x},\qquad   \mathcal{T}_n f (x):= \frac{1}{\sqrt{2\pi}}\sum_{\ell=-n}^{n+\theta} \tilde {c_\ell}  e^{i\ell x},
\end{align}
where

\begin{numcases}{\theta=}
0, & $N$\; is\;odd, \nonumber \\
-1, &$N$\; is\;even, \nonumber
\end{numcases}
and $s_\ell=\mathcal{S}_\lambda(\tilde{c}_\ell),\; \tilde{c_\ell}$ is defined by \eqref{cofficientsTrig}. We first state error bounds for $ \mathcal{T}_{n} f$ for comparison. The following lemma is come from \cite[Problem 8.12]{kress1998numerical}.
\begin{lemma}
Given $ f\in \mathcal{C}_{2\pi} ([-\pi,\pi]),$ and let $\mathcal{T}_{n}f \in \mathbb{T}_{n}$ be defined by \eqref{operator}. Then
\begin{align} \label{hypererrorbound}
\Vert \mathcal{T}_{n} f\Vert_2 \leq \sqrt{2\pi} \Vert f\Vert_\infty,
\end{align}
and
\begin{align}\label{hypererrorbound1}
\Vert \mathcal{T}_{n}f-f\Vert_2 \leq 2\sqrt{2\pi} E_n(f).
\end{align}
Thus $\Vert\mathcal{T}_{n}f-f\Vert_2\rightarrow 0$ as $n\rightarrow \infty.$
\end{lemma}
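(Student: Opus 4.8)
The plan is to adapt the classical hyperinterpolation error analysis of Sloan to the present setting, using the orthonormality relations already established in the proof of Lemma~3.1. First I would prove the stability estimate $\Vert \mathcal{T}_n f\Vert_2 \le \sqrt{2\pi}\,\Vert f\Vert_\infty$, then derive the near-best-approximation bound \eqref{hypererrorbound1} from it by a triangle-inequality argument together with the reproduction property $\mathcal{T}_n p = p$ for $p\in\mathbb{T}_n$, and finally obtain the convergence statement directly from the corollary of Fej\'er's theorem cited in this section, which gives $E_n(f)\to 0$ for every $f\in\mathcal{C}_{2\pi}([-\pi,\pi])$.

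For the stability estimate the key ingredient is the ``hyperinterpolation'' identity
\begin{align*}
\langle \mathcal{T}_n f, g\rangle_{L_2} = \frac{2\pi}{N}\sum_{j=0}^{N-1} f(x_j)\overline{g(x_j)} \qquad \text{for all } g\in\mathbb{T}_n,
\end{align*}
which one verifies by expanding $g$ in the exponential basis and using $L_2$-orthonormality of $\{e^{i\ell x}/\sqrt{2\pi}\}$ on the left and Definition~2.2 of $\tilde c_\ell$ on the right; this is exactly the computation $\mathbf{A^*WA}=\mathbf{I}$ (and its even-$N$ analogue $\mathbf{B^*WB}=\mathbf{I}$) from the proof of Lemma~3.1, which in turn rests on the exactness Lemma~2.1. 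Taking $g=\mathcal{T}_n f\in\mathbb{T}_n$ and applying the discrete Cauchy--Schwarz inequality gives $\Vert \mathcal{T}_n f\Vert_2^2 \le \Vert f\Vert_N\,\Vert \mathcal{T}_n f\Vert_N$, where $\Vert g\Vert_N:=\big(\frac{2\pi}{N}\sum_{j=0}^{N-1}|g(x_j)|^2\big)^{1/2}$. Since the Gram matrix of the exponential basis with respect to this discrete inner product is the identity, $\Vert \mathcal{T}_n f\Vert_N=\Vert \mathcal{T}_n f\Vert_2$, so after cancelling one factor, $\Vert \mathcal{T}_n f\Vert_2 \le \Vert f\Vert_N \le \big(\frac{2\pi}{N}\cdot N\big)^{1/2}\Vert f\Vert_\infty = \sqrt{2\pi}\,\Vert f\Vert_\infty$, which is \eqref{hypererrorbound}.

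For \eqref{hypererrorbound1}, let $p^*$ be the best uniform approximation of degree $n$, so $\Vert f-p^*\Vert_\infty=E_n(f)$ and $\mathcal{T}_n p^*=p^*$; then
\begin{align*}
\Vert \mathcal{T}_n f - f\Vert_2 &\le \Vert \mathcal{T}_n(f-p^*)\Vert_2 + \Vert p^*-f\Vert_2 \\
&\le \sqrt{2\pi}\,\Vert f-p^*\Vert_\infty + \sqrt{2\pi}\,\Vert f-p^*\Vert_\infty = 2\sqrt{2\pi}\,E_n(f),
\end{align*}
using \eqref{hypererrorbound} on the first term and $\Vert h\Vert_2\le\sqrt{2\pi}\,\Vert h\Vert_\infty$ on the second; letting $n\to\infty$ and invoking the cited corollary of Fej\'er's theorem finishes the proof. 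The main obstacle is the even-length case: there $\mathcal{T}_n$ reproduces not all of $\mathbb{T}_n$ but only the subspace spanned by $1,\cos x,\sin x,\dots,\cos(n-1)x,\sin(n-1)x,\cos nx$ (the constraint $\tilde c_{-n}=\tilde c_n$ in Definition~2.2), so $\mathcal{T}_n p^*=p^*$ needs care. The saving fact is that $\sin nx$ vanishes at every node $x_j=-\pi+2\pi j/N$ when $2n=N$, so the best-approximation comparison can be carried out inside the reproduced subspace without worsening the order of the bound, and one must also double-check via Lemma~2.1 that the $N$-point trapezoidal rule remains exact for the products of basis functions occurring above; this even/odd bookkeeping is the one nonroutine point, while the rest is a direct transcription of the standard argument.
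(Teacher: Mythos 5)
Your proposal is correct, and for the error bound \eqref{hypererrorbound1} and the convergence claim it coincides with the paper: the paper uses exactly the same decomposition $\mathcal{T}_n f-f=\mathcal{T}_n(f-p)+(p-f)$ with $p=p^*$, the stability bound on the first term, and $\Vert g\Vert_2\le\sqrt{2\pi}\Vert g\Vert_\infty$ on the second. Where you genuinely diverge is the stability estimate \eqref{hypererrorbound}: the paper proves it by a direct coefficient computation, writing out $\tilde c_\ell$, pulling $\Vert f\Vert_\infty$ out of the quadrature sums and using exactness of the trapezoidal rule on $e^{-i\ell x}$ so that only the $\ell=0$ term survives, whereas you reproduce Sloan's hyperinterpolation argument: the identity $\langle\mathcal{T}_n f,g\rangle_{L_2}=\frac{2\pi}{N}\sum_j f(x_j)\overline{g(x_j)}$ for $g\in\mathbb{T}_n$, discrete Cauchy--Schwarz with $g=\mathcal{T}_n f$, and the equality of the discrete and continuous norms of $\mathcal{T}_n f$ coming from the identity Gram matrices ($\mathbf{A^*WA}=\mathbf{I}$, resp.\ $\mathbf{B^*WB}=\mathbf{I}$). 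Your route is arguably the more robust one: the paper's chain of inequalities bounds $\bigl|\sum_j f(x_j)e^{-i\ell x_j}\bigr|$ by $\Vert f\Vert_\infty\bigl|\sum_j e^{-i\ell x_j}\bigr|$, a step that is not literally valid termwise and really only works because of the underlying orthogonality your argument makes explicit. You are also more careful than the paper on the even-$N$ case: the paper simply asserts $\mathcal{T}_n p=p$ for all $p\in\mathbb{T}_n$, while you correctly observe that with $\tilde c_{-n}=\tilde c_n$ the operator annihilates $\sin nx$ (which vanishes at the nodes) and that the comparison must be run in the reproduced subspace, at the harmless cost of replacing $E_n(f)$ by the best approximation from that subspace (bounded by $E_{n-1}(f)$), which does not affect the convergence conclusion.
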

\begin{proof}
From the definition of the $L_2$ norm, we have
\begin{align}\label{triginerL2error}
\Vert e^{i\ell x}\Vert_2=\left (\int_{-\pi}^\pi e^{i\ell x} e^{-i\ell x} dx\right)^{\frac{1}{2}}=\sqrt{2\pi},
\end{align}
for any nonzero $f \in \mathcal{C}_{2\pi}([-\pi,\pi]),$ we have
\begin{align*}
\Vert \mathcal{T}_n f\Vert_2&= \left \Vert\frac{1}{\sqrt{2\pi}} \sum_{\ell=-n}^{n+\theta} \left(\frac{\sqrt{2\pi}}{N} \sum_{j=0}^{N-1}  f(x_j)e^{-i\ell x_j}  \right)e^{i\ell x} \right \Vert_2 \\
                            &\leq \frac{1}{N} \Vert f\Vert_\infty \Vert e^{i\ell x}\Vert_2 \sum_{\ell=-n}^{n+\theta} \bigg{ \Vert}\sum_{j=0}^{N-1}   1 \cdot e^{-i\ell x_j}  \bigg{\Vert}_2=\Vert f\Vert_\infty \Vert e^{i\ell x}\Vert_2,
\end{align*}
where the equality is due to the exactness \eqref{exactness}. With the help of \eqref{triginerL2error}, we obtain \eqref{hypererrorbound}.

Since $\mathcal{T}_{n} p= p$ for any trigonometric polynomial $p\in \mathbb{T}_n,$  we have
\begin{align}
\Vert \mathcal{T}_{n}f-f\Vert_2 &\leq  \Vert \mathcal{T}_{n}(f-p)+\mathcal{T}_{n}p-f\Vert_2 \nonumber \\
                                &\leq  \sqrt{2\pi}\Vert f-p\Vert_\infty +\Vert p-f\Vert_2, \label{triginerL2error1}
\end{align}
where the second term on the right of inequality \eqref{triginerL2error1} is due to the  Cauchy--Schwarz inequality, which ensures $\Vert g\Vert_2=\sqrt{\left<g,g \right>}\leq \Vert g\Vert_\infty \sqrt{\left<1,1\right>}=\sqrt{2\pi}\Vert g\Vert_\infty$ for all $ g\in\mathcal{C}_{2\pi}([-\pi,\pi]).$ As the inequality holds for arbitrary $p \in \mathbb{T}_n,$  letting $p=p^*$ in \eqref{triginerL2error1} leads to
\begin{align*}
\Vert \mathcal{T}_{n}f-f\Vert_2 \leq  \sqrt{2\pi}E_n (f) +\sqrt{2\pi} E_n (f)=2 \sqrt{2\pi}E_n (f).
\end{align*}
\end{proof}

\noindent Interestingly, one may note the error bounds \eqref{hypererrorbound} and \eqref{hypererrorbound1} are similar to the error bounds of hyperinterpolation on a circle \cite[Example B]{SLOAN1995238}.

\begin{theorem} \label{lemma1}
Adopt conditions of Theorem \ref{nonzeroAssumption}, and let $K(f)=\sum_{\ell=-n}^{n+\theta} \left( \mathcal{S}_\lambda(\tilde{c}_\ell)\tilde{c}_\ell- \mathcal{S}_\lambda^2(\tilde{c}_\ell)   \right).$   Then there exists $\tau_1<1,$ which relies on $f$ and is inversely related to $K(f)$ such that
\begin{align}
\Vert  \mathcal{T}^\lambda_{n}f\Vert_2 \leq \tau_1 \sqrt{2\pi}\Vert f\Vert_\infty,
\end{align}
and there exists $\tau_2<1,$ which relies on $f$ and is inversely related to $K(f-p^*)$  such that
\begin{align}\label{errorbound}
\Vert \mathcal{T}^\lambda_{n} f-f\Vert_2 \leq (1+\tau_2) \sqrt{2\pi} E_n(f)+\Vert p^*-\mathcal{T}^\lambda_{n} p^* \Vert_2.
\end{align}
\end{theorem}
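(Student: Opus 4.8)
The plan is to reduce both inequalities to the orthogonality relation \eqref{triginerL2error}, which turns the $L_2$ norm of any trigonometric sum $\frac{1}{\sqrt{2\pi}}\sum_{\ell=-n}^{n+\theta}d_\ell e^{i\ell x}$ into the Euclidean norm $\bigl(\sum_\ell|d_\ell|^2\bigr)^{1/2}$ of its coefficients, and then to exploit the pointwise properties of the soft threshold operator $\mathcal{S}_\lambda$: that $\mathcal{S}_\lambda(a)$ has the same sign as $a$, that $|\mathcal{S}_\lambda(a)|\le|a|$, and that $|\mathcal{S}_\lambda(a)-\mathcal{S}_\lambda(b)|\le|a-b|$.

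\emph{First inequality.} Applying \eqref{triginerL2error} to \eqref{operator} gives $\Vert\mathcal{T}^\lambda_n f\Vert_2^2=\sum_{\ell=-n}^{n+\theta}|\mathcal{S}_\lambda(\tilde{c}_\ell)|^2$ and $\Vert\mathcal{T}_n f\Vert_2^2=\sum_{\ell=-n}^{n+\theta}|\tilde{c}_\ell|^2\le 2\pi\Vert f\Vert_\infty^2$ by \eqref{hypererrorbound}. Using the algebraic identity $|\tilde{c}_\ell|^2-|\mathcal{S}_\lambda(\tilde{c}_\ell)|^2=|\tilde{c}_\ell-\mathcal{S}_\lambda(\tilde{c}_\ell)|^2+2\bigl(\mathcal{S}_\lambda(\tilde{c}_\ell)\tilde{c}_\ell-\mathcal{S}_\lambda^2(\tilde{c}_\ell)\bigr)$, in which the last summand is nonnegative by the sign property and $|\mathcal{S}_\lambda|\le|\cdot|$, I would sum over $\ell$ and discard the term $\sum_\ell|\tilde{c}_\ell-\mathcal{S}_\lambda(\tilde{c}_\ell)|^2\ge 0$ to obtain
\begin{align*}
\Vert\mathcal{T}^\lambda_n f\Vert_2^2\le\Vert\mathcal{T}_n f\Vert_2^2-2K(f)\le 2\pi\Vert f\Vert_\infty^2-2K(f).
\end{align*}
The hypothesis $\lambda<\Vert\mathbf{A^*Wf}\Vert_\infty$ (resp. $\lambda<\Vert\mathbf{B^*Wf}\Vert_\infty$) forces at least one $|\tilde{c}_\ell|>\lambda$, so $K(f)>0$; and $2K(f)\le\Vert\mathcal{T}_n f\Vert_2^2\le 2\pi\Vert f\Vert_\infty^2$. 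Hence $\tau_1:=\bigl(1-K(f)/(\pi\Vert f\Vert_\infty^2)\bigr)^{1/2}$ lies in $[0,1)$, is strictly decreasing in $K(f)$, and delivers $\Vert\mathcal{T}^\lambda_n f\Vert_2\le\tau_1\sqrt{2\pi}\Vert f\Vert_\infty$.

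\emph{Second inequality.} I would insert the best approximant $p^*$ and its Lasso version $\mathcal{T}^\lambda_n p^*$:
\begin{align*}
\Vert\mathcal{T}^\lambda_n f-f\Vert_2\le\Vert\mathcal{T}^\lambda_n f-\mathcal{T}^\lambda_n p^*\Vert_2+\Vert\mathcal{T}^\lambda_n p^*-p^*\Vert_2+\Vert p^*-f\Vert_2.
\end{align*}
The last term is $\le\sqrt{2\pi}\Vert p^*-f\Vert_\infty=\sqrt{2\pi}E_n(f)$ by the Cauchy--Schwarz estimate $\Vert g\Vert_2\le\sqrt{2\pi}\Vert g\Vert_\infty$ used in the proof of \eqref{hypererrorbound1}; the middle term is precisely the extra term $\Vert p^*-\mathcal{T}^\lambda_n p^*\Vert_2$ of \eqref{errorbound}. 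For the first term I would invoke the inequality just proved with $g:=f-p^*$, for which $\Vert g\Vert_\infty=E_n(f)$, producing the factor $\tau_2:=\bigl(1-K(f-p^*)/(\pi E_n(f)^2)\bigr)^{1/2}<1$ — exactly the quantity inversely related to $K(f-p^*)$ demanded by the statement. Combining the three estimates yields \eqref{errorbound}.

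\emph{Main obstacle.} The delicate point is the first term of that last decomposition. Because $\mathcal{S}_\lambda$ is not additive, $\mathcal{T}^\lambda_n f-\mathcal{T}^\lambda_n p^*\ne\mathcal{T}^\lambda_n(f-p^*)$ in general, so applying the first inequality to $g=f-p^*$ must be routed through the linearity $\mathcal{T}_n f-\mathcal{T}_n p^*=\mathcal{T}_n(f-p^*)$ of plain interpolation (together with $\mathcal{T}_n p^*=p^*$) and the $1$-Lipschitz property of $\mathcal{S}_\lambda$; the genuinely nontrivial part is extracting a factor strictly below $1$ from this step, which is where one needs $K(f-p^*)>0$ and where a careful accounting of exactly which interpolation coefficients of $f-p^*$ exceed $\lambda$ in modulus must be carried out.
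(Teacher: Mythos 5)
Your treatment of the first inequality is correct and amounts to the same estimate as the paper's: the paper obtains $\Vert\mathcal{T}^\lambda_n f\Vert_2^2\le 2\pi\Vert f\Vert_\infty^2-2K(f)$ via the exactness of the trapezoidal rule rather than via Parseval, but the substance (and the definition $\tau_1=(1-K(f)/(\pi\Vert f\Vert_\infty^2))^{1/2}$) is identical, and your observation that $\lambda<\Vert\mathbf{A^*Wf}\Vert_\infty$ forces $K(f)>0$ is a point the paper leaves implicit. The problem is the second inequality, and it is exactly the point you flag as the ``main obstacle'': your decomposition produces the term $\Vert\mathcal{T}^\lambda_n f-\mathcal{T}^\lambda_n p^*\Vert_2$, and you never actually prove the bound $\Vert\mathcal{T}^\lambda_n f-\mathcal{T}^\lambda_n p^*\Vert_2\le\tau_2\sqrt{2\pi}E_n(f)$ with $\tau_2<1$ that your argument needs. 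The $1$-Lipschitz property of $\mathcal{S}_\lambda$ only gives $\Vert\mathcal{T}^\lambda_n f-\mathcal{T}^\lambda_n p^*\Vert_2\le\Vert\mathcal{T}_n(f-p^*)\Vert_2\le\sqrt{2\pi}E_n(f)$, i.e.\ the factor $1$, which yields only the weaker bound $2\sqrt{2\pi}E_n(f)+\Vert p^*-\mathcal{T}^\lambda_n p^*\Vert_2$. Moreover, the coefficientwise strengthening that your sketch would require, namely $|\mathcal{S}_\lambda(a)-\mathcal{S}_\lambda(b)|^2\le|a-b|^2-2\bigl(\mathcal{S}_\lambda(a-b)(a-b)-\mathcal{S}_\lambda^2(a-b)\bigr)$, is false: take $a=3\lambda$, $b=\lambda$, so that the left side is $4\lambda^2$ while the right side is $4\lambda^2-2\lambda^2=2\lambda^2$. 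So ``careful accounting of which coefficients exceed $\lambda$'' cannot rescue this route in general; the gap is real, not merely a detail left to the reader.

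The paper takes a different decomposition precisely to avoid this: it writes $\mathcal{T}^\lambda_n f-f$ as $\mathcal{T}^\lambda_n(f-p^*)-(f-p^*)-(p^*-\mathcal{T}^\lambda_n p^*)$, so that the operator is applied to the error function $f-p^*$ itself, and then the first inequality (applied to $f-p^*$) legitimately supplies the factor $\tau_2\sqrt{2\pi}\Vert f-p^*\Vert_\infty$ with $\tau_2$ inversely related to $K(f-p^*)$; the remaining terms are handled exactly as in your last two estimates. (One may object that the paper states this decomposition as an identity, which tacitly uses an additivity that the nonlinear operator $\mathcal{T}^\lambda_n$ does not possess; but the essential mechanism of the paper's proof is that $\tau_2$ and $K(f-p^*)$ enter through $\Vert\mathcal{T}^\lambda_n(f-p^*)\Vert_2$, not through a comparison of $\mathcal{T}^\lambda_n f$ with $\mathcal{T}^\lambda_n p^*$.) To complete your argument you should replace the insertion of $\mathcal{T}^\lambda_n p^*$ between $\mathcal{T}^\lambda_n f$ and $p^*$ by the paper's splitting, or else supply a genuinely new argument for the term $\Vert\mathcal{T}^\lambda_n f-\mathcal{T}^\lambda_n p^*\Vert_2$, which, as the counterexample above shows, cannot be the soft-threshold analogue of the identity you used in the first part.
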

The proof of Theorem \ref{lemma1} is inspired by \cite{Anlasso2021}. \\
\begin{proof}
 With the exactness of $N$-point trapezoidal rule, we have
\begin{align*}
\Vert  \mathcal{T}^\lambda_{n}f\Vert_2^2+  \sum_{j=0}^{N-1} \frac{2\pi}{N} \left|\frac{1}{\sqrt{2\pi}} \sum_{\ell=-n}^{n+\theta}s_\ell e^{i\ell x_j}-f(x_j)\right|^2 &= 2\sum_{\ell=-n}^{n+\theta} (s_\ell^2-s_\ell \tilde{c}_\ell)+\sum_{j=0}^{N-1} \frac{2\pi}{N} f^2(x_j),
\end{align*}
and thus
\begin{align}
\Vert  \mathcal{T}^\lambda_{n}f\Vert_2^2 &\leq  2\sum_{\ell=-n}^{n+\theta} (s_\ell^2- s_\ell \tilde{c}_\ell)+\sum_{j=0}^{N-1} \frac{2\pi}{N} f^2(x_j) \nonumber  \\
                                          &\leq 2\pi \Vert f\Vert_\infty^2 -2\sum_{\ell=-n}^{n+\theta} \left( \mathcal{S}_\lambda(\tilde{c}_\ell)\tilde{c}_\ell- \mathcal{S}_\lambda^2(\tilde{c}_\ell)        \right). \label{Aproximate1}
\end{align}
Note that $|\tilde{c}_\ell|\geq |\mathcal{S}_\lambda(\tilde{c}_\ell)|$ and from the fact that they have the same signs if $\mathcal{S}_\lambda(\tilde{c}_\ell) \neq 0.$ When $\lambda=0,$ we have $\mathcal{S}_\lambda(\tilde{c}_\ell) = \tilde{c}_\ell,$ and when $\lambda$ is so large that $|\tilde{c}_\ell| \leq \lambda,$ we have $\mathcal{S}_\lambda(\tilde{c}_\ell)=0;$ above these make the second term of \eqref{Aproximate1} $>0.$ Then there exists $\tau_1=\tau_1(K(f))<1,$ which is inversely related to $K(f)$ such that
\begin{align*}
\Vert  \mathcal{T}^\lambda_{n}f\Vert_2\leq \left(2\pi\Vert f\Vert^2_\infty -2K(f) \right)^{\frac{1}{2}} = \tau_1 \sqrt{2\pi} \Vert f\Vert_\infty.
\end{align*}
For any trigonometric polynomial $p \in \mathbb{T}_n,$ we have
\begin{align*}
\Vert \mathcal{T}^\lambda_{n} f-f\Vert_2 &=\Vert\mathcal{T}^\lambda_{n} (f-p)-(f-p)-(p-\mathcal{T}^\lambda_{n} p)\Vert_2 \\
&\leq \Vert  \mathcal{T}^\lambda_{n}  (f-p)\Vert_2+\Vert f-p\Vert_2+\Vert p-\mathcal{T}^\lambda_{n} p\Vert_2.
\end{align*}
Since the inequality holds for arbitrary $p \in \mathbb{T}_n,$ and let $p=p^*.$ Then, there exists $\tau_2=\tau_2(K(f-p^*))<1,$ which is inversely related to $K(f-p^*)$ such that
\begin{align*}
\Vert \mathcal{T}^\lambda_{n}  f-f\Vert_2 \leq \tau_2\sqrt{2\pi}\Vert f-p^*\Vert_\infty+\sqrt{2\pi}\Vert f-p^*\Vert_\infty+\Vert p^*-\mathcal{T}^\lambda_{n}  p^*\Vert_2.
\end{align*}
Hence, we obtain the error bound \eqref{errorbound}.
\end{proof}

Comparing with the stability result \eqref{hypererrorbound} and error bound \eqref{hypererrorbound1} of $\mathcal{T}_{n}f,$ it is shown that Lasso can reduce both of them, but an additional regularization error $\Vert p^*-\mathcal{T}^\lambda_{n} p^*\Vert_2$ is introduced in a natural manner. In general, we do not recommend the use Lasso in the absence of noise. As a matter of fact, if the data values are noisy, Lasso will play an important part in reducing noise \cite{Anlasso2021}.

The following theorem describes the denoising ability of $  \mathcal{T}^\lambda_{n}.$
\begin{theorem}
Adopt conditions of Theorem \ref{nonzeroAssumption}. Assume $f^\epsilon \in \mathcal{C}([-\pi,\pi])$ is a noise version of $f,$ and let $\mathcal{T}_n^\lambda f^\epsilon \in \mathbb{T}_n$ be defined by \eqref{operator}.   Then there exists $\tau_3<1,$ which relies on $f$ and $f^\epsilon$ and is inversely related to $K(f^\epsilon-p^*)$ such that
\begin{align}\label{errorboundnoise}
\Vert \mathcal{T}_n^\lambda f^\epsilon-f\Vert_2 \leq \tau_3 \sqrt{2\pi}\Vert f-f^\epsilon\Vert_\infty+(1+\tau_3)\sqrt{2\pi}E_n(f)+\Vert p^*-\mathcal{T}^\lambda_n p^*\Vert_2,
\end{align}
where $p^*$ is the best approximation of $f$ in $\mathbb{T}_n$ over $[-\pi,\pi].$
\end{theorem}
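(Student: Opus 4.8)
The plan is to deduce \eqref{errorboundnoise} from the noise-free estimate of Theorem~\ref{lemma1} by inserting the best approximation polynomial $p^{*}\in\mathbb{T}_{n}$ and, crucially, by carrying the data perturbation $f-f^{\epsilon}$ inside the argument of $\mathcal{T}_{n}^{\lambda}$, so that it is damped by the same contraction factor that Theorem~\ref{lemma1} produces instead of surfacing as a separate, unreduced summand. Concretely, the proof of Theorem~\ref{lemma1} yields, for any $g\in\mathcal{C}_{2\pi}([-\pi,\pi])$ and any $p\in\mathbb{T}_{n}$, the identity $\mathcal{T}_{n}^{\lambda}g-g=\mathcal{T}_{n}^{\lambda}(g-p)-(g-p)-(p-\mathcal{T}_{n}^{\lambda}p)$; taking $g=f^{\epsilon}$, $p=p^{*}$ and adding $f^{\epsilon}-f$ gives, after simplification,
\begin{align*}
\mathcal{T}_{n}^{\lambda}f^{\epsilon}-f=\mathcal{T}_{n}^{\lambda}(f^{\epsilon}-p^{*})-(f-p^{*})-(p^{*}-\mathcal{T}_{n}^{\lambda}p^{*}),
\end{align*}
so that, by the triangle inequality in the $L_{2}$ norm,
\begin{align*}
\Vert\mathcal{T}_{n}^{\lambda}f^{\epsilon}-f\Vert_{2}\le\Vert\mathcal{T}_{n}^{\lambda}(f^{\epsilon}-p^{*})\Vert_{2}+\Vert f-p^{*}\Vert_{2}+\Vert p^{*}-\mathcal{T}_{n}^{\lambda}p^{*}\Vert_{2}.
\end{align*}

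Then I would estimate the three terms separately. The last one is precisely the regularization error in \eqref{errorboundnoise} and is kept untouched. For the middle one, the Cauchy--Schwarz inequality $\Vert g\Vert_{2}\le\sqrt{2\pi}\,\Vert g\Vert_{\infty}$ established in the proof of Lemma~5.1 gives $\Vert f-p^{*}\Vert_{2}\le\sqrt{2\pi}\,\Vert f-p^{*}\Vert_{\infty}=\sqrt{2\pi}\,E_{n}(f)$. For the first term, I would apply the stability bound of Theorem~\ref{lemma1} with the function $f$ there replaced by $f^{\epsilon}-p^{*}$: this furnishes a constant $\tau_{3}:=\tau_{1}\bigl(K(f^{\epsilon}-p^{*})\bigr)<1$, depending on $f$ (through $p^{*}$) and on $f^{\epsilon}$ and inversely related to $K(f^{\epsilon}-p^{*})$, with $\Vert\mathcal{T}_{n}^{\lambda}(f^{\epsilon}-p^{*})\Vert_{2}\le\tau_{3}\sqrt{2\pi}\,\Vert f^{\epsilon}-p^{*}\Vert_{\infty}$. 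A second triangle inequality, now in the uniform norm, $\Vert f^{\epsilon}-p^{*}\Vert_{\infty}\le\Vert f-f^{\epsilon}\Vert_{\infty}+\Vert f-p^{*}\Vert_{\infty}=\Vert f-f^{\epsilon}\Vert_{\infty}+E_{n}(f)$, then gives $\Vert\mathcal{T}_{n}^{\lambda}(f^{\epsilon}-p^{*})\Vert_{2}\le\tau_{3}\sqrt{2\pi}\,\Vert f-f^{\epsilon}\Vert_{\infty}+\tau_{3}\sqrt{2\pi}\,E_{n}(f)$. Summing the three pieces produces $\tau_{3}\sqrt{2\pi}\,\Vert f-f^{\epsilon}\Vert_{\infty}+(1+\tau_{3})\sqrt{2\pi}\,E_{n}(f)+\Vert p^{*}-\mathcal{T}_{n}^{\lambda}p^{*}\Vert_{2}$, which is \eqref{errorboundnoise}.

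Once Theorem~\ref{lemma1} is available, the argument is essentially bookkeeping, so the only point that really has to be gotten right is the choice of the split: if one instead first peeled off $\mathcal{T}_{n}^{\lambda}f^{\epsilon}-\mathcal{T}_{n}^{\lambda}f$ and then bounded $\Vert f-f^{\epsilon}\Vert_{2}$ directly, the noise would enter with coefficient at least $\sqrt{2\pi}$ (no damping) and the claimed improvement over $\mathcal{T}_{n}f$ would be lost; bundling $f-f^{\epsilon}$ with $p^{*}$ inside $\mathcal{T}_{n}^{\lambda}(\cdot)$ is what routes the noise through the contraction. A minor technical subtlety is that Theorem~\ref{lemma1} is being invoked for $f^{\epsilon}-p^{*}$ rather than for $f$: when $K(f^{\epsilon}-p^{*})>0$ (equivalently $\mathcal{T}_{n}^{\lambda}(f^{\epsilon}-p^{*})\neq0$, see Theorem~\ref{nonzeroAssumption}) it applies directly and yields $\tau_{3}<1$; when $K(f^{\epsilon}-p^{*})=0$ the term $\mathcal{T}_{n}^{\lambda}(f^{\epsilon}-p^{*})$ vanishes altogether, so the estimate holds with any $\tau_{3}\in(0,1)$. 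In either case the bound \eqref{errorboundnoise} follows.
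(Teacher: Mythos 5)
Your proposal follows essentially the same route as the paper's own proof: the same splitting $\mathcal{T}_n^\lambda f^\epsilon-f=\mathcal{T}_n^\lambda(f^\epsilon-p^*)-(f-p^*)-(p^*-\mathcal{T}_n^\lambda p^*)$, the triangle inequality in $L_2$, the stability bound of Theorem \ref{lemma1} applied to $f^\epsilon-p^*$ to produce $\tau_3$, the Cauchy--Schwarz estimate $\Vert f-p^*\Vert_2\leq\sqrt{2\pi}E_n(f)$, and the final uniform-norm triangle inequality $\Vert f^\epsilon-p^*\Vert_\infty\leq\Vert f-f^\epsilon\Vert_\infty+E_n(f)$. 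Your extra remark covering the degenerate case $K(f^\epsilon-p^*)=0$ is a small addition the paper leaves implicit, but otherwise the argument coincides with the published proof.
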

\begin{proof}
For any trigonometric polynomial $p\in \mathbb{T}_n,$ we have
\begin{align*}
\Vert \mathcal{T}_n^\lambda f^\epsilon -f\Vert_2&=\Vert\mathcal{T}_n^\lambda(f^\epsilon-p)-(f-p)-(p-\mathcal{T}_n^\lambda p)\Vert_2 \\
                                               &\leq \Vert \mathcal{T}_n^\lambda(f^\epsilon-p)\Vert_2+\Vert f-p\Vert_2+\Vert p-\mathcal{T}_n^\lambda p\Vert_2.
\end{align*}
Then by Theorem \ref{lemma1}, letting $p=p^*$ gives
\begin{align*}
\Vert \mathcal{T}_n^\lambda f^\epsilon -f\Vert_2 \leq \tau_3 \sqrt{2\pi} \Vert f^\epsilon-p^*\Vert_\infty+\sqrt{2\pi}\Vert f-p^*\Vert_\infty+\Vert p^*-\mathcal{T}_n^\lambda p^*\Vert_2,
\end{align*}
where $\tau_3<1$ is inversely related to $K(f^\epsilon-p^*).$ Estimating $\Vert f^\epsilon-p^*\Vert_\infty $ by $ \Vert f^\epsilon-p^*\Vert_\infty \leq \Vert f^\epsilon-f\Vert_\infty+\Vert f-p^*\Vert_\infty$ gives \eqref{errorboundnoise}.
\end{proof}
\begin{remark}
When there exists noise and $\lambda=0,$ there holds
\begin{align*}
\Vert\mathcal{T}_n f^\epsilon-f\Vert_2 \leq \sqrt {2\pi}\Vert f-f^\epsilon\Vert_\infty+2\sqrt {2\pi}E_n(f),
\end{align*}
which enlarges the part $\tau_3 \sqrt{2\pi} \Vert f-f^\epsilon\Vert_\infty+(1+\tau_3)\sqrt{2\pi}E_n(f)$ in \eqref{errorboundnoise} but vanishes the part $\Vert p^*-\mathcal{T}^\lambda_n p^*\Vert_2.$ Hence there should be a trade-off strategy for $\lambda$ in practice.
\end{remark}

\section{Numerical experiments}
In this section, we report numerical results to illustrate the theoretical results derived from above and test the approximation efficiency of  Lasso trigonometric interpolation $\mathcal{T}^\lambda_n f(x)$ defined by Definition 3.2 and trigonometric interpolation on an equidistant grid $\mathcal{T}_n f(x)$ defined by Definition 2.2. For convenience in programming, the choices of the basis of $\mathbb{T}_n$ are normalized trigonometric system
$$\left\{\frac{1}{\sqrt{2\pi}},\frac{\cos x}{\sqrt{\pi}},\frac{\sin x}{\sqrt{\pi}},\frac{\cos 2x}{\sqrt{\pi}},\frac{\sin 2x}{\sqrt{\pi}},\ldots  \right\} \qquad {\rm and} \qquad    \left\{\frac{1}{\sqrt{2\pi}},\frac{\cos x}{\sqrt{\pi}},\frac{\cos 2x}{\sqrt{\pi}},\ldots  \right\}.$$
We only show the even length case ($N$ is odd), the odd length case ($N$ is even) is similar. Six testing functions are involved in the following experiments: \\

Basic wave functions on $[-\pi,\pi]$ \cite{2006Music}:
\begin{align*}
& Sin\; wave: f_1(x)=\sin(x), \\ \\
& Triangular\; wave: f_2(x)=\left\{
\begin{aligned}
&\frac{3\pi}{2}-x   \qquad  -\pi\leq x\leq 0,    \\
& x-\frac{5\pi}{2} \;\qquad 0\leq x\leq \pi,
\end{aligned}
\right. \\ \\
& Sawtooth\; wave: f_3(x)=\left\{
\begin{aligned}
&(\pi-x)/2   \quad \;\;\qquad 0<x<\pi,    \\
&-(\pi+x)/2   \qquad -\pi<x<0,   \\
& 0  \qquad\qquad\qquad\;\;\;\; x=\pm \pi,
\end{aligned}
\right. \\ \\
& Square\; wave: f_4(x)=\left\{
\begin{aligned}
&-1  \qquad  \; \;-\pi\leq x<0,    \\
&1 \qquad \qquad 0< x\leq \pi, \\
&0 \qquad \qquad  x=0,
\end{aligned}
\right.
\end{align*}
and these functions can be easily generated by MATLAB command \texttt{sin(x)}, \texttt{sawtooth(x,1/2)}, \texttt{sawtooth(x)}, \texttt{square(x)}, with symbolic variable \texttt{x}. \\

Periodic function with high oscillation:
\begin{align}
&f_5(x)=\cos(50x+4\sin(5x)),  \label{highoscillation2}   \\
&\nonumber \\
&f_6(x):  r (u''+u')-\cos(t)u=0.1,\;\;     0\leq t\leq 2\pi,  \label{highoscillation1}  
\end{align}
where $f_5(x)$ is $''FM\; signal''$ \cite{ExtPeriodic2015}, which is the way of carrying audio signals on a radio frequency carrier \cite[Chapter 8]{2006Music}.  $f_6(x)$ is the  numerical solution to periodic ODE \eqref{highoscillation1},  which could be computed by an automatic Fourier spectral method \cite{ExtPeriodic2015},  and parameter $r$ controls the oscillatory degree of $f_6(x)$, the larger $r$ is, the less oscillation of $f_6(x).$  In our next experiments, we set $r=0.001.$ 

We adopt the relative error to test the efficiency of approximation, which is estimated as follows:
\begin{align}\label{relate_error}
\frac{ \Vert f(x)-\mathcal{T}^{\lambda}_n f(x)\Vert_{L_2}}{\Vert f(x)\Vert_{L_2}}\;\;\; {\rm and}\;\;\; \frac{ \Vert f(x)-\mathcal{T}_n f(x)\Vert_{L_2}}{\Vert f(x)\Vert_{L_2}}
\end{align}
with
 \begin{eqnarray*}
  \|f(x)-\mathcal{T}^\lambda_n f(x)\|_{L_2} = &\left(\bigintsss_{-\pi}^{\pi} (f(x)-\mathcal{T}^\lambda_n f(x))^2{\rm d}x\right)^{1/2}\\
                                       \simeq &\left(\frac{2\pi}{N}  \sum_{j=1}^N (f(x_j)-\mathcal{T}^\lambda_n f(x_j))^2\right)^{1/2}.
\end{eqnarray*}
\indent We first test the sparsity of LTI and classical trigonometric interpolation with data sampled on an equidistant points. The level of noise is measured by $signal$-$to$-$noise$\; $radio \;(SNR),$  which is defined as the ratio of signal to the noisy data, and is often expressed in decibels (dB). For given clean signal $\mathbf{f}\in \mathbb{R}^{N\times 1},$ we add noise to this data
\begin{align*}
\bf{d}=\bf{f}+\alpha \bf{\epsilon},
\end{align*}                                                 
where $\alpha$ is a scalar used to yield a predefined SNR, $\bf{\epsilon}$ is a vector following Gaussian distribution with mean value 0. Then we give the definition of SNR:
\begin{align*}
{\rm SNR}=10 \log_{10} \left( \frac{P_{{\rm signal}}}{\alpha P_{{\rm noise}}}      \right),
\end{align*}
where $P_{{\rm signal}}=\sqrt{\frac{1}{N}\sum_{k=1}^N f^2(x_k)},\; P_{\rm{noise}}$ is the standard deviation of $\bf{\epsilon}.$  A lower scale of  SNR suggests more noisy data.  For regularization parameter $\lambda,$  we choose from sequence $10^{-4}:10^{-4}:0.3,$ which makes \eqref{relate_error} minimum. For more advanced and adaptive methods to choose the parameter $\lambda$, we refer to \cite{GenH1979,Parameterchoice2015}. The sparsity is tested by the zero norm $\Vert \boldsymbol{\alpha}\Vert_0,$ the smaller $\Vert \boldsymbol{\alpha}\Vert_0$ means sparser.     Fix $N=501,$ then the degree $n=250.$ Adding 90 dB, 30 dB, and 10 dB Gauss white noise onto sampled data, respectively. From Table 1 and Table 2, we can see $\mathcal{T}_n^\lambda f(x)$ is sparse in most cases. However, $\mathcal{T}_n f(x)$ is non-sparse unless the sample date is free from noise for some functions (This case is impossible in practice). Even though the sampled data is disturbed only by little noise, $\mathcal{T}_n f(x)$ almost have not any sparsity, which means that LTI has absolutely advantage than classical trigonometric interpolation in storing data.

We then test the denoise ability of $\mathcal{T}_n^\lambda f(x)$ and $\mathcal{T}_n f(x).$ Fix $L=(N-1)/2,$ and let $N$ increase from $5$ to $501$ with step $6$ (ensure $N$ is odd).  Computational results plotted in Figure 2 asset trigonometric interpolation \eqref{trigonometricinterpolation} could not be used to approximation periodic functions with seriously noise. Conversely,  LTI can reduce the $L_2$ error of all testing fucntions as the number of interpolation points increasing, especially for the $\sin$ wave and FM signal.

We also test the efficiency of LTI in recovering basic wave functions with 10 dB Gauss white noise.  In the rest numerical experiments, for the number of sample data, we always set $N=501$ and $L=250.$  In Figure 3, the  performance of recovering images and error curves of $\sin$ and triangular wave is amazing, which means that Lasso improves the robustness of trigonometric interpolation with respect to noise in general. However, the LTI couldn't recover sawtooth, square wave as well as above two waves and meanwhile the denoising pictures show there exists Gibbs phenomenon \cite{trefethen2013approximation}.

To solve this difficult, we add Lanczos $\sigma$ factor \cite{lanczos1988applied} into the coefficients of LTI, i.e.,
\begin{align}\label{LanczocLTI}
 \mathcal{T}^{\lambda \sigma}_n f(x) =\frac{1}{\sqrt{2\pi}} \sum_{\ell=-n}^n (\sigma)^p \mathcal{S}_\lambda(\tilde{c}_\ell) e^{i\ell x},                         
 \end{align}
with
$$\sigma=
\left\{\begin{aligned}
& \frac{\sin(\pi \cdot (\ell/L))}{ \pi \cdot (\ell/L)} \;\;\; \ell\neq 0,  \\ \\
& 1                                 \;\;\;\;\qquad \qquad\quad  \ell=0,
\end{aligned}
\right.   
$$
where $p>0.$   For periodic high oscillatory function $f_6(x),$ let $p=5$ in \eqref{LanczocLTI}. In Figure 4, we can see the recovery efficiency of trigonometric interpolation, LTI, and LTI with Lanczoc $\sigma$ factor.  Even though the efficiencies of recovering high oscillatory function of LTI and LTI with Lanczoc $\sigma$ factor are not remarkable compared with the efficiencies of recovering $\sin$ or triangular wave by LTI, it could still reduce noise to some extent. Moreover, the LTI with Lanczoc $\sigma$ factor could significant reduce the oscillation, which is arisen by Gibbs phenomenon. Thus it's worthwhile  combining LTI and  Lanczoc $\sigma$ factor  for recovering periodic high oscillatory function.
\begin{table*}
\centering
\caption{The sparsity of $\mathcal{T}_n f(x)$ for $N=501$}
\begin{tabular}{lllll} 
\toprule
   &  \scriptsize{$\Vert \boldsymbol{\alpha} \Vert_0$ of $\mathcal{T}_n f(x)$} &   \scriptsize{$\Vert \boldsymbol{\alpha} \Vert_0$ of $\mathcal{T}_n f(x)$} &   \scriptsize{$\Vert \boldsymbol{\alpha} \Vert_0$ of $\mathcal{T}_n f(x)$}   &   \scriptsize{$\Vert \boldsymbol{\alpha} \Vert_0$ of $\mathcal{T}_n f(x)$}   \\ 
   & \scriptsize{without noise} &  \scriptsize{ with 90 dB noise}  &  \scriptsize{ with 30 dB noise}&  \scriptsize{ with 10 dB noise} \\ 
  \midrule
${\rm Sin}$ wave          & \qquad   1 &\qquad 499 &\qquad 500 &\qquad 500 \\
Triangular wave          & \qquad  251&\qquad 497  &\qquad 501 &\qquad 501  \\
Sawtooth wave             &\qquad  501&\qquad 501 &\qquad 501 &\qquad 501 \\
Square wave                  &\qquad247  &\qquad 499  &\qquad 500 &\qquad 500  \\
  High oscillatory $f_5(x)$  &  \qquad 26  &\qquad 498 &\qquad 501 &\qquad 501  \\
   High oscillatory $f_6(x)$  & \qquad 97 &\qquad 501 &\qquad 501 &\qquad 501   \\
  \bottomrule
  \end{tabular}
  \label{tbl:table1}
\end{table*}

\begin{table*}
\centering
\caption{The sparsity of $\mathcal{T}_n^\lambda f(x)$ for $N=501$}
\begin{tabular}{lllll} 
\toprule
   &  \scriptsize{$\Vert \boldsymbol{\alpha} \Vert_0$ of $\mathcal{T}_n^\lambda f(x)$} &   \scriptsize{$\Vert \boldsymbol{\alpha} \Vert_0$ of $\mathcal{T}^\lambda_n f(x)$} &   \scriptsize{$\Vert \boldsymbol{\alpha} \Vert_0$ of $\mathcal{T}^\lambda_n f(x)$}   &   \scriptsize{$\Vert \boldsymbol{\alpha} \Vert_0$ of $\mathcal{T}^\lambda_n f(x)$}   \\ 
   & \scriptsize{without noise} &  \scriptsize{ with 90 dB noise}  &  \scriptsize{ with 30 dB noise}&  \scriptsize{ with 10 dB noise} \\ 
  \midrule
${\rm Sin}$ wave          & \qquad   1 &\qquad 1 &\qquad 7  &\qquad 10 \\
Triangular wave          & \qquad  3&\qquad 89  &\qquad 39  &\qquad 8\\
Sawtooth wave             &\qquad 57&\qquad 440 &\qquad 324 &\qquad 145 \\
Square wave                  &\qquad8 &\qquad 126  &\qquad 170 &\qquad 82 \\
  High oscillatory $f_5(x)$  &  \qquad 13 &\qquad 22 &\qquad 68 &\qquad 65 \\
   High oscillatory $f_6(x)$  & \qquad 63 &\qquad 83  &\qquad 216 &\qquad 184 \\
  \bottomrule
  \end{tabular}
  \label{tbl:table1}
\end{table*}
\begin{figure}[htbp]
  \centering
  \includegraphics[width=\textwidth]{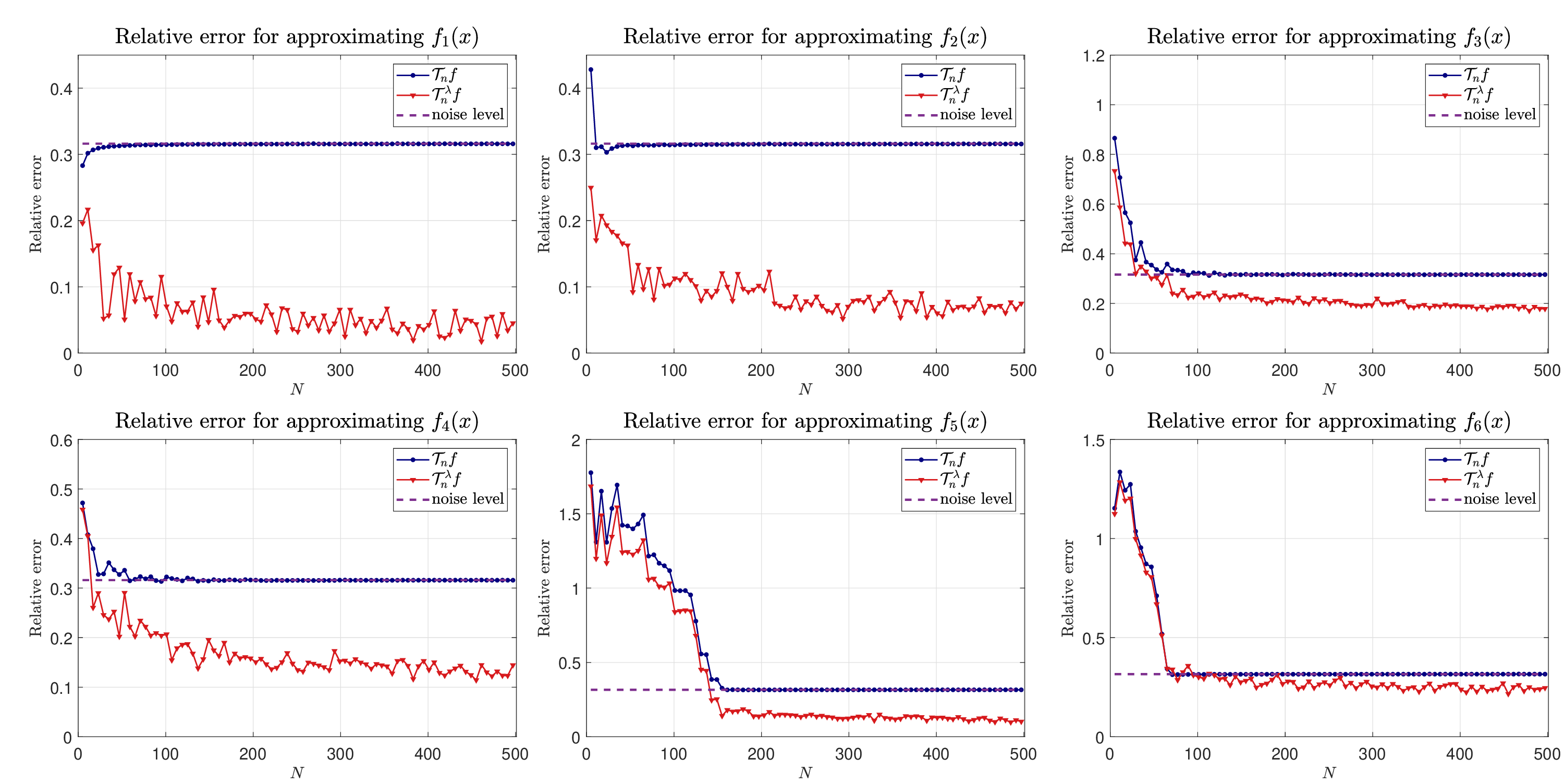}\\
\caption{Computational results for six testing functions on approximation scheme \eqref{trigonometricinterpolation} and \eqref{Lassotriginterpolation2} with increasing $N$  from $5$ to $501$ with step $6$ and fixed $L=(N-1)/2$ in the presence of 10 dB Gauss white noise. As $N$ increased, all curves of relative error show the well performances of approximation scheme \eqref{Lassotriginterpolation2}  in reducing noise.}
\end{figure}

\begin{figure}[htbp]
  \centering
  \includegraphics[width=\textwidth]{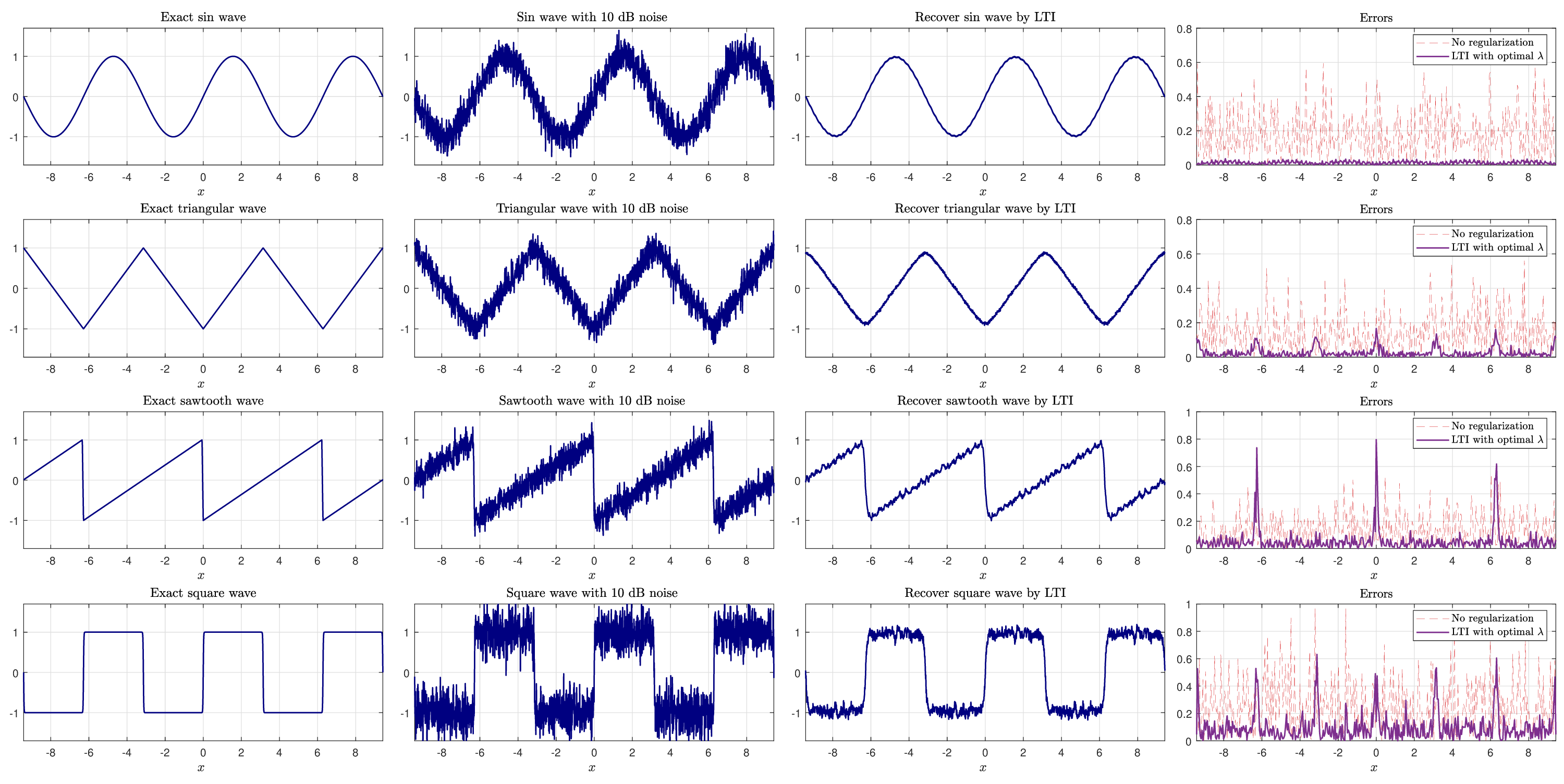}\\
\caption{Approximation results of $\sin$ wave, triangular wave, sawtooth wave, square wave with 10 dB Gauss white noise via LTI.}
\end{figure}

\begin{figure}[htbp]
  \centering
  \includegraphics[width=\textwidth]{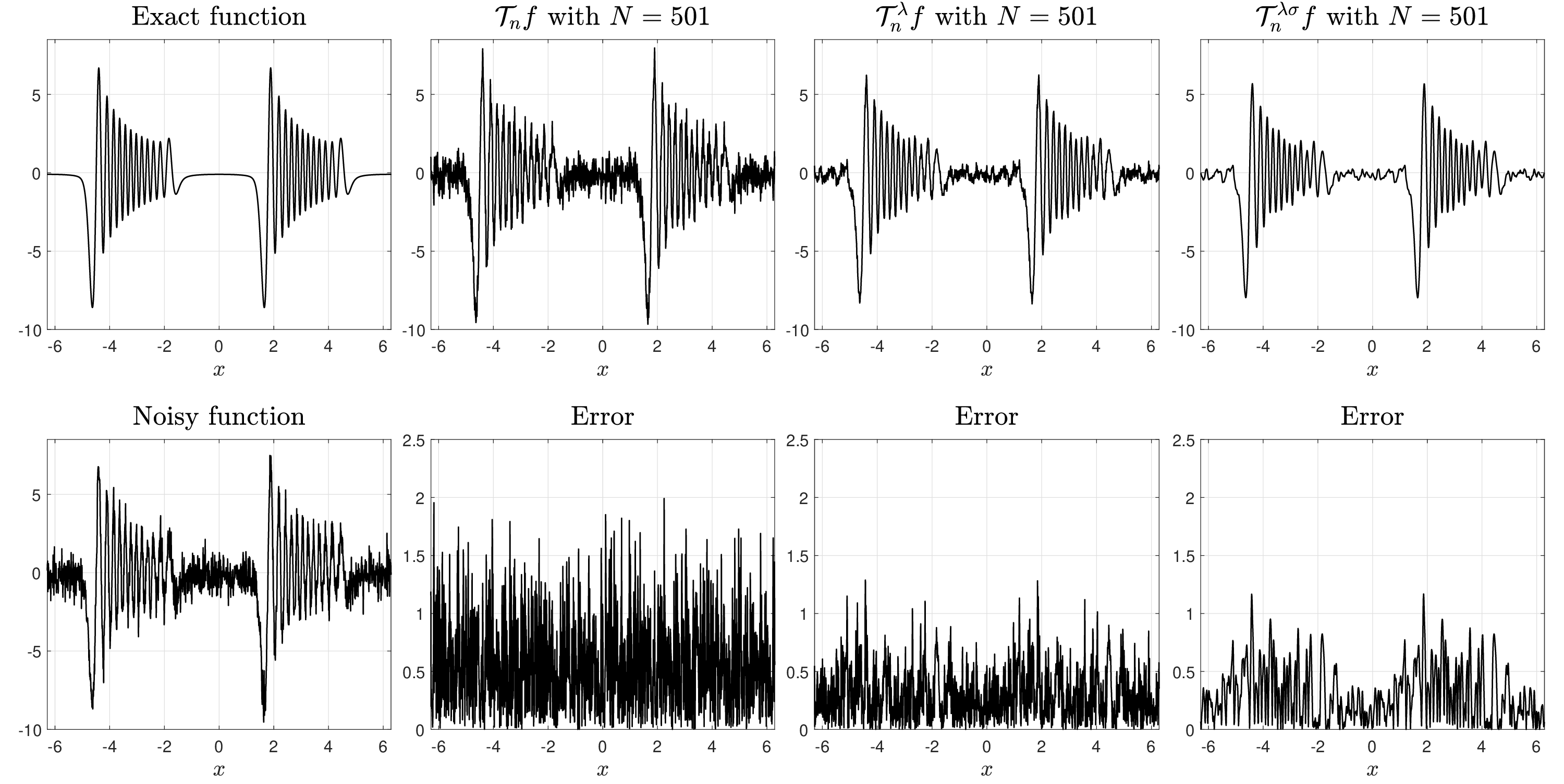}\\
\caption{Approximation results of oscillatory function $f_6(x)$ with 10 dB Gauss white noise via trigonometric interpolation, LTI, LTI with Lanczos $\sigma$ factor.}
\end{figure}

\begin{figure}[htbp]
  \centering
  \includegraphics[width=\textwidth]{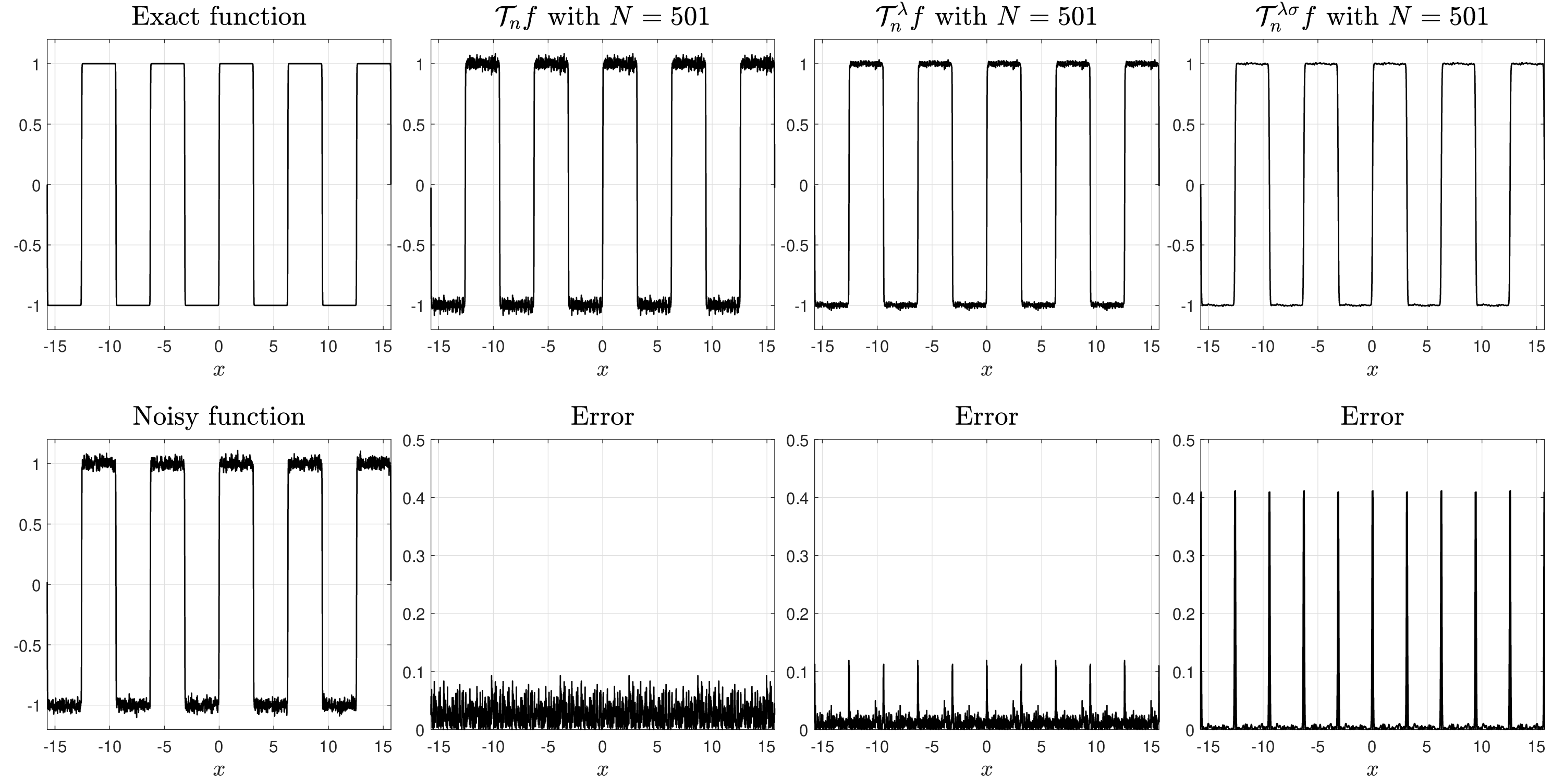}\\
\caption{Approximation results of square wave with 30 dB Gauss white noise via trigonometric interpolation, LTI, LTI with Lanczos $\sigma$ factor.}
\end{figure}

At last, we try to recover square wave and sawtooth wave with lower noise. In fact, recovering these two functions is a great challenge, we couldn't recover them better than images in Figure 3 by any methods in this paper when noise level is high. For lower level noise, say 30 dB Gauss white noise, we just display the efficiency of recovery about square wave (The image of sawtooth wave is similar). Fix $p=30,$  Figure 5 shows that the trigonometric interpolation and LTI could not reduce noise significantly, but the LTI with Lanczoc $\sigma$ factor could recover square wave well in general. However, note that in some certain interval, both LTI and LTI with Lanczoc $\sigma$ factor have great discrepancy with exact function, how to improve or overcoming this phenomenon is still a problem.

\section{Concluding remarks}
What we have seen from the above is that Lasso combined with trigonometric interpolation can reduce noise when the approximation function is periodical.  In particular, we obtain the Lasso trigonometric interpolation, which induces the conventional trigonometric interpolation with the same interpolation points. This new trigonometric interpolation can not only recover periodic continuous function better, but also has characteristic of coefficient sparse compared with normal trigonometric interpolation, which needs more attention in the future.

Although we only consider the $\ell_1$-regularized term, it also provides some useful informations that regularization may improve performance of trigonometric polynomial approximation. In inverse problems, statistics, and machine learning, different kinds of regularization terms are developed \cite{An_2020}. We may consider other regularization techniques and derive other regularized trigonometric interpolation. Last but far from the least, the regularization parameter $\lambda$ deserve future studies  \cite{An_2020}. How to choose a proper regularization parameter $\lambda$? This is an interesting topic in regularization problems, for further study, we may consider the method in \cite{GenH1979,Hansen2007,lazarov2007balancing,lu2010discrepancy,Parameterchoice2015}.

\section*{Acknowledgment}
The authors acknowledge the support of the Tianfu Emei plan (No. 1914) and NSF of China (No.12371099). The second author is greatly thankful to Dr. Andre Milzarek for valuable comments that helped to improve this paper. We are very grateful to the anonymous referees for their careful reading of our manuscript and their many insightful comments.

\bibliographystyle{acm}

\begin{thebibliography}{10}

\bibitem{An_2020}
{\sc An, C., and Wu, H.-N.}
\newblock Tikhonov regularization for polynomial approximation problems in
  {G}auss quadrature points.
\newblock {\em Inverse Problems 37}, 1 (2020), 015008.

\bibitem{Anlasso2021}
{\sc An, C., and Wu, H.-N.}
\newblock {L}asso hyperinterpolation over general regions.
\newblock {\em SIAM Journal on Scientific Computing 43}, 6 (2021),
  A3967--A3991.

\bibitem{atkinson2007theoretical}
{\sc Atkinson, K., and Han, W.}
\newblock {\em Theoretical Numerical Analysis: A Functional Analysis
  Framework}.
\newblock Texts in Applied Mathematics. Springer New York, 2007.

\bibitem{2012Quadrature}
{\sc Beckmann, J., Mhaskar, H.~N., and Prestin, J.}
\newblock Quadrature formulas for integration of multivariate trigonometric
  polynomials on spherical triangles.
\newblock {\em Gem International Journal on Geomathematics 3}, 1 (2012),
  119--138.

\bibitem{2006Music}
{\sc Benson, D.}
\newblock {\em Music: A Mathematical Offering}.
\newblock Cambridge University Press, 2006.

\bibitem{Bruckstein2009}
{\sc Bruckstein, A.~M., Donoho, D.~L., and Elad, M.}
\newblock From sparse solutions of systems of equations to sparse modeling of
  signals and images.
\newblock {\em SIAM Review 51}, 1 (2009), 34--81.

\bibitem{Minimum2013}
{\sc Chandrasekaran, S., Jayaraman, K.~R., and Mhaskar, H.~N.}
\newblock Minimum {S}obolev norm interpolation with trigonometric polynomials
  on the torus.
\newblock {\em Journal of Computational Physics 249\/} (2013), 96--112.

\bibitem{clarke1983optimization}
{\sc Clarke, F.}
\newblock {\em Optimization and Nonsmooth Analysis}.
\newblock Canadian Mathematical Society series of monographs and advanced
  texts. Wiley, 1983.

\bibitem{Numerical2008}
{\sc Dahlquist, G., and Bj{\"o}rck, {\AA}.}
\newblock {\em Numerical Methods in Scientific Computing: Volume 1}.
\newblock Society for Industrial and Applied Mathematics, USA, 2008.

\bibitem{Threshold2011}
{\sc Foucart, S.}
\newblock Hard thresholding pursuit: An algorithm for compressive sensing.
\newblock {\em SIAM Journal on Numerical Analysis 49}, 6 (2011), 2543--2563.

\bibitem{GenH1979}
{\sc Golub, G.~H., Heath, M., and Wahba, G.}
\newblock Generalized {C}ross-{V}alidation as a method for choosing a good
  ridge parameter.
\newblock {\em Technometrics 21}, 2 (1979), 215--223.

\bibitem{Hansen2007}
{\sc Hansen, P.~C.}
\newblock Regularization tools version 4.0 for matlab 7.3.
\newblock {\em Numerical Algorithms 46}, 2 (2007), 189--194.

\bibitem{Henrici1979}
{\sc Henrici, P.}
\newblock Barycentric formulas for interpolating trigonometric polynomials and
  their conjugates.
\newblock {\em Numerische Mathematik 33}, 2 (1979), 225--234.

\bibitem{2006Hyperinterpolation}
{\sc Hesse, K., and Sloan, I.~H.}
\newblock Hyperinterpolation on the sphere. {I}n: {F}rontiers in
  {I}nterpolation and {A}pproximation({D}edicated to the {M}emory of
  {A}mbikeshwar {S}harma) (eds.: N. {K}. {G}ovil, {H}. {N}. {M}haskar, {R}am
  {N}. {M}ohapatra, {Z}uhair {N}ashed and {J}. {S}zabados).
\newblock {\em Champman $\&$ Hall/CRC\/} (2006), 213--248.

\bibitem{kress1998numerical}
{\sc Kress, R.}
\newblock {\em Numerical Analysis}.
\newblock Graduate Texts in Mathematics vol 181. Springer, New York, 1998.

\bibitem{Kress1993}
{\sc Kress, R., and Sloan, I.~H.}
\newblock On the numerical solution of a logarithmic integral equation of the
  first kind for the {H}elmholtz equation.
\newblock {\em Numerische Mathematik 66}, 1 (1993), 199--214.

\bibitem{lanczos1988applied}
{\sc Lanczos, C.}
\newblock {\em Applied Analysis}.
\newblock Dover Books on Mathematics. Dover Publications, 1988.

\bibitem{lazarov2007balancing}
{\sc Lazarov, R.~D., Lu, S., and Pereverzev, S.~V.}
\newblock On the balancing principle for some problems of numerical analysis.
\newblock {\em Numerische Mathematik 106}, 4 (2007), 659--689.

\bibitem{lu2010discrepancy}
{\sc Lu, S., Pereverzev, S.~V., Shao, Y., and Tautenhahn, U.}
\newblock Discrepancy curves for multi-parameter regularization.
\newblock {\em Journal of Inverse and Ill-posed Problems 18}, 6 (2010),
  655--676.

\bibitem{Parameterchoice2015}
{\sc Pereverzyev, S.~V., Sloan, I.~H., and Tkachenko, P.}
\newblock Parameter choice strategies for least-squares approximation of noisy
  smooth functions on the sphere.
\newblock {\em SIAM Journal on Numerical Analysis 53}, 2 (2015), 820--835.

\bibitem{powell1981approximation}
{\sc Powell, M. J.~D.}
\newblock {\em Approximation Theory and Methods}.
\newblock Cambridge University Press, 1981.

\bibitem{pr?ssdorf1991numerical}
{\sc Pr{\"o}ssdorf, S., and Silbermann, B.}
\newblock {\em Numerical Analysis for Integral and Related Operator Equations}.
\newblock Operator theory. Springer, 1991.

\bibitem{SLOAN1995238}
{\sc Sloan, I.}
\newblock Polynomial interpolation and hyperinterpolation over general regions.
\newblock {\em Journal of Approximation Theory 83}, 2 (1995), 238--254.

\bibitem{stein2003fourier}
{\sc Stein, E., and Shakarchi, R.}
\newblock {\em Fourier Analysis: An Introduction}.
\newblock Princeton Lectures in Analysis. Princeton University Press, 2003.

\bibitem{Stilson1996AliasFreeDS}
{\sc Stilson, T.~S., and Smith, J.~O.}
\newblock Alias-free digital synthesis of classic analog waveforms.
\newblock In {\em International Conference on Mathematics and Computing\/}
  (1996).

\bibitem{Lasso1996}
{\sc Tibshirani, R.}
\newblock Regression shrinkage and selection via the {L}asso.
\newblock {\em Journal of the Royal Statistical Society: Series B
  (Methodological) 58}, 1 (1996), 267--288.

\bibitem{Tibshirani2011}
{\sc Tibshirani, R.}
\newblock Regression shrinkage and selection via the {L}asso: a retrospective.
\newblock {\em Journal of the Royal Statistical Society: Series B (Statistical
  Methodology) 73}, 3 (2011), 273--282.

\bibitem{trefethen2013approximation}
{\sc Trefethen, L.~N.}
\newblock {\em Approximation Theory and Approximation Practice}, vol.~128.
\newblock SIAM, Philadelphia, 2013.

\bibitem{ExpTrapezoidal}
{\sc Trefethen, L.~N., and Weideman, J. A.~C.}
\newblock The exponentially convergent trapezoidal rule.
\newblock {\em SIAM Review 56}, 3 (2014), 385--458.

\bibitem{ExtPeriodic2015}
{\sc Wright, G.~B., Javed, M., Montanelli, H., and Trefethen, L.~N.}
\newblock Extension of {C}hebfun to periodic functions.
\newblock {\em SIAM Journal on Scientific Computing 37}, 5 (2015), C554--C573.

\bibitem{zygmund2002trigonometric}
{\sc Zygmund, A.}
\newblock {\em Trigonometric Series}.
\newblock Cambridge Mathematical Library. Cambridge University Press, 2002.

\end{thebibliography}

\end{document}